\newtheorem{theorem}{Theorem}[section]
\newtheorem{proposition}[theorem]{Proposition}
\newtheorem{lemma}[theorem]{Lemma}
\newtheorem{remark}[theorem]{Remark}
\newtheorem{example}[theorem]{Example}
\newtheorem{problem}[theorem]{Problem}
\newcommand{\cali}[1]{\mathscr{#1}}
\newcommand{\volume}{{\rm vol}}
\newcommand{\lov}{{\rm lov}}
\newcommand{\supp}{{\rm supp}}
\newcommand{\dist}{{\rm dist}}
\newcommand{\diam}{{\rm diam}}
\newcommand{\chac}{{\rm \bf 1}}
\newcommand{\ddbar}{{\partial\overline\partial}}
\newcommand{\SO}{{\rm SO}}
\newcommand{\Homeo}{{\rm Homeo}}
\newcommand{\GLW}{{\rm GLW}}
\newcommand{\id}{{\rm id}}
\newcommand{\Cc}{\cali{C}}
\newcommand{\Dc}{\cali{D}}
\newcommand{\Fc}{\cali{F}}
\newcommand{\Lc}{\cali{L}}
\newcommand{\Mc}{\cali{M}}
\newcommand{\Pc}{\cali{P}}
\newcommand{\Qc}{\cali{Q}}
\newcommand{\Uc}{\cali{U}}
\newcommand{\C}{\mathbb{C}}
\newcommand{\D}{\mathbb{D}}
\newcommand{\N}{\mathbb{N}}
\newcommand{\Z}{\mathbb{Z}}
\newcommand{\R}{\mathbb{R}}
\newcommand{\T}{\mathbb{T}}
\newcommand{\B}{\mathbb{B}}
\newcommand{\U}{\mathbb{U}}
\renewcommand{\P}{\mathbb{P}}
\title{Entropy for hyperbolic Riemann surface laminations I}
\author{Tien-Cuong Dinh, Viet-Anh Nguy{\^e}n and Nessim Sibony}
\begin{document}

\maketitle

\begin{abstract}
We develop a notion  of entropy, using hyperbolic time, for laminations by hyperbolic Riemann surfaces.  When the lamination is compact and transversally smooth, we show that the entropy is finite and at least equal to 2. Moreover, the Poincar\'e metric on leaves is transversally H\"older continuous. A notion of metric entropy is also introduced for harmonic measures.  
\end{abstract}

\noindent
{\bf Classification AMS 2010:} 37F75, 37A.

\noindent
{\bf Keywords:} foliation, lamination, Poincar{\'e} metric, entropy, harmonic measure.

\bigskip
\noindent
{\bf Notation.} Throughout the paper, $\D$ denotes the unit disc
in $\C$, $r\D$ denotes the disc of center 0 and of radius $r,$ and
$\D_R\subset\D$ is the disc of center $0$ and of radius $R$ with
respect to the Poincar{\'e} metric on $\D$,
i.e. $\D_R=r\D$ with $R:=\log[(1+r)/(1-r)]$. 
Poincar{\'e} metric on
a Riemann surface, in particular on $\D$ and on the leaves of a lamination, is
given by a positive $(1,1)$-form that we denote by $\omega_P$. The associated distance and diameter are denoted by $\dist_P$ and $\diam_P$. 
A leaf through a point $x$ of a lamination is often denoted by $L_x$ and $\phi_x:\D\to L_x$ denotes a universal covering map of $L_x$ such that $\phi_x(0)=x$.

\section{Introduction} \label{introduction}

The main goal of this   paper is to introduce  a notion of  entropy for possibly singular 
hyperbolic laminations  by Riemann surfaces. We also  study the  transverse 
regularity of the Poincar\'e metric and the finiteness of the entropy. In order to simplify the presentation, we will mostly  focus, in this first part, on compact laminations which are transversally smooth. 
We will study the case of singular foliations in the second part of this paper.

The question of hyperbolicity of leaves for generic foliations in $\P^k$ has
been adressed by many authors. We just mention here the case of
a polynomial vector field  in $\C^k.$ It induces
a foliation by Riemann surfaces  in the  complex projective space $\P^k.$ We
can consider that  this  foliation is   the  image of the
foliation in $\C^{k+1}$  given by a  holomorphic vector field
 $$F(z):=\sum_{j=0}^k  F_j(z) \frac{\partial}{\partial z_j} $$
with $F_j$ homogeneous polynomials of degree $d \geq 2$ without common factor.    

The  singular set
 corresponds  to
the union of the indeterminacy points of  $f= [F_0  : \cdots : F_k ]$ and
 the fixed
points of $f$ in $\P^k$.
The  nature of the leaves  as  abstract Riemann surfaces  has  received much
 attention.
Glutsyuk \cite{Glutsyuk} and Lins Neto  \cite{Neto}
 have  shown that   on  a generic  foliation $\Fc$ of degree $d$ the leaves
 are  covered  by the
unit disc in $\C$. We then  say  that the foliation is {\it hyperbolic}.  More precisely,
 Lins Neto  has shown that
this is the case  when all  singular points have  non-degenerate linear part. In
 \cite{CandelGomezMont} Candel-Gomez-Mont  have  shown
that  if all the  singularities are  hyperbolic, the Poincar\'e metric  on
leaves is transversally  continuous.
We will consider this situation in the second part of this work.

Let $(X,\Lc)$ be a (transversally) smooth compact lamination by hyperbolic Riemann surfaces.
We show in Section \ref{section_poincare} that the Poincar\'e metric on leaves is  transversally H\"{o}lder continuous.
The exponent of   H\"{o}lder continuity can be estimated in geometric terms.  The continuity was proved by Candel in \cite{Candel}.
The main tool of the proof is to use  Beltrami's equation in order to
compare universal covering maps of  any leaf $L_y$ near a given leaf $L_x$. More precisely, we first  construct a non-holomorphic parametrization $\psi$ from $\D_R$ to $L_y$ which is close to a universal covering map $\phi_x:\D\to L_x$. Precise geometric estimates on $\psi$ allow us to modify it, using Beltrami's equation. We then obtain a holomorphic map that we can explicitly compare with a universal covering map $\phi_y:\D\to L_y$.

Our second concern is to define  the  entropy of hyperbolic lamination possibly with singularities. A notion  of geometric entropy  for regular Riemannian foliations was  introduced by  Ghys-Langevin-Walczak \cite{GhysLangevinWalczak}, see also Candel-Conlon  \cite{CandelConlon1, CandelConlon2}  and Walczak  \cite{Walczak}.  It is
related to the entropy of the holonomy pseudogroup, which depends on the  chosen  generators. 
The basic idea
is to quantify how  much leaves get far apart transversally. The transverse regularity of the metric on leaves and the lack of singularities  play a role in the  finiteness of the entropy.  

Ghys-Langevin-Walczak show in particular that  when their geometric  entropy vanishes, the foliation admits a transverse measure. The survey by Hurder \cite{Hurder} gives an account on many important results in foliation theory and contains a large bibliography.

Our notion of entropy contains a large number of classical situations. An interesting fact is that this entropy is related to an increasing family of distances as in  Bowen's point of view \cite{Bowen}. This allows us for example to introduce other dynamical notions like metric entropy, local entropies or Lyapounov exponents. 

We first introduce in Section \ref{section_entropy} a general notion  of entropy on a metric space $(X,d).$  To a given family of distances 
$(\dist_t)_{t\geq 0}$, we associate an entropy which  measures the growth rate (when $t$ tends to infinity) of the number of balls of small radius $\epsilon$, in the  metric $\dist_t$,
needed in order to cover the space $X$. 

For hyperbolic  Riemann surface  laminations  we  define
$$
\dist_t(x,y):=\inf\limits_{\theta\in \R} \sup_{\xi\in \D_t} \dist_X(\phi_x(e^{i\theta}\xi),\phi_y(\xi)).
$$
Recall that $\phi_x$ and $\phi_y$ are universal covering maps for the leaves through $x$ and $y$ respectively with  $\phi_x(0)=x$ and  $\phi_y(0)=y$. These maps are unique up to a rotation on $\D$.  
The metric $\dist_t$ measures how  far two leaves
 get apart before  the hyperbolic time $t.$ It takes into account the time parametrization like in the classical
case where one measures the distance of two orbits before time $n$, by
measuring the distance at each time $i<n$. So, we  are not just concerned
 with geometric proximity.

We will show that our entropy is  finite  for compact hyperbolic laminations which are transversally smooth.
The notion of entropy can be extended to Riemannian foliations and
 a priori it is bigger than or equal to the geometric entropy introduced by  
 Ghys, Langevin and Walczak. 
 
 As for  the  tranverse regularity of the Poincar\'e metric, the main tool is to  estimate the distance  between leaves
 using the Beltrami equation in order to go from geometric  estimates to the analytic ones needed in our definition.
 The  advantage here is that the hyperbolic time  we choose is canonical. So, the value of the entropy  is  unchanged 
 under homeomorphisms  between laminations which are holomorphic along leaves. 
 
 The proof that the entropy is  finite  for singular 
 foliations is  quite delicate and requires  a  careful analysis  of the dynamics around the singularities.  
 We will consider this problem in the second part of the paper.
 We will discuss in Section \ref{section_entropy_metric} a notion of metric entropy for harmonic probability measures and give there some open questions.


\section{Poincar\'e metric on laminations} \label{section_poincare}

In this section, we give some basic properties of laminations by hyperbolic Riemann surfaces. We will show that the Poincar\'e metric on leaves of a smooth compact hyperbolic lamination is transversally H\"older continuous. 

Let $X$ be a locally compact space.  A {\it lamination} or {\it Riemannian lamination} $\Lc$ on $X$  is  the  data of  an atlas with charts 
$$\Phi_i:\U_i\rightarrow \B_i\times \T_i.$$
Here, $\T_i$ is a locally compact metric space, $\B_i$ is a domain in $\R^n$,
$\Phi_i$ is a homeomorphism defined on an open subset $\U_i$ of
$X$ and  all the changes of coordinates $\Phi_i\circ\Phi_j^{-1}$ are of the form
$$(x,t)\mapsto (x',t'), \quad x'=\Psi(x,t),\quad t'=\Lambda(t),$$
where $\Psi,\Lambda$ are continuous maps and $\Psi$ is smooth with
respect to $x$. 

The open set $\U_i$ is called a {\it flow
  box} and the manifold $\Phi_i^{-1}\{t=c\}$ in $\U_i$ with $c\in\T_i$ is a {\it
  plaque}. The property of the above coordinate changes insures that
the plaques in different flow boxes are compatible in the intersection of
the boxes. Two plaques are {\it adjacent} if they have non-empty intersection.
In what follows, we always reduce slightly  flow boxes in order to avoid a bad geometry near their boundaries. For simplicity we consider only $\B_i$ which are homeomorphic to a ball. 

A {\it leaf} $L$ is a minimal connected subset of $X$ such
that if $L$ intersects a plaque, it contains that plaque. So, a leaf $L$
is a connected real manifold of dimension $n$ immersed in $X$ which is a
union of plaques. It is not difficult to see that $\overline L$ is
also a lamination. A {\it chain of plaques} is a sequence $P_0,\ldots,P_m$ of plaques such that $P_i$ is adjacent to $P_{i+1}$ for $i=0,\ldots, m-1$. These plaques belong necessarily to the same leaf.  
 
A {\it transversal} in a flow box is a closed set of the box which intersects every
plaque in one point. In particular, $\Phi_i^{-1}(\{x\}\times \T_i)$ is a
transversal in $\U_i$ for any $x\in \B_i$.  In order to
simplify the notation, we often identify $\T_i$
with $\Phi_i^{-1}(\{x\}\times \T_i)$ for some $x\in \B_i$ or even
identify $\U_i$ with $\B_i\times\T_i$ via the map $\Phi_i$.

We are mostly interested in the case where the $\T_i$ are closed subsets of smooth real manifolds and the functions $\Psi,\Lambda$ are smooth in all variables. In this case, we say that the lamination is {\it smooth} or {\it transversally smooth}.
If, moreover, $X$ is compact, we can embed it in an $\R^N$ in order to use the distance induced by a Riemannian metric on $\R^N$. 
When $X$ is a Riemannian manifold and the leafs of $\Lc$ are manifolds immersed in $X$, we say that $(X,\Lc)$ is a {\it foliation} and we often assume that the foliation is {\it transversally smooth}, i.e., the maps $\Phi_i$ above are smooth. 

In the definition of laminations, 
 if  all the  $\B_i$   are  domains  in $\C$ and $\Psi$ is holomorphic with respect to $x$, we say that $(X,\Lc)$ is a
{\it Riemann surface lamination}.
Recall  that   a {\it Riemann surface lamination with singularities} is  the data
$(X,\Lc,E)$, where $X$ is a locally compact space, $E$ a closed
subset of $X$ and $(X\setminus E,\Lc)$ is a Riemann surface
lamination. The set $E$ is {\it the singularity set} of the lamination and  we   assume that $\overline{X\setminus E}=X$, see  e.g. \cite{DinhNguyenSibony2, FornaessSibony1}  for more details.

Consider now a smooth Riemann surface lamination $(X,\Lc)$. 
When we do not assume that $X$ is compact, our discussion can be applied to singular laminations by considering their regular parts.
Assume that the leaves of $X$ are all (Kobayashi) hyperbolic. Let $\phi_x:\D\to L_x$ be a universal covering map of the leaf through $x$ with $\phi_x(0)=x$. Then, the Poincar\'e metric on $\D$ induces a metric on $L_x$ which depends only on the leaf. The Poincar\'e metric on $L_x$ is given by a positive $(1,1)$-form that we always denote by $\omega_P$. The associated distance is denoted by $\dist_P$. 

Let $\omega$ be a Hermitian metric on the leaves which is transversally smooth. We can construct such a metric on flow boxes and glue them using a partition of unity.
We have
$$\omega=\eta^2\omega_P\quad \mbox{where}\quad 
\eta(x):=\|D\phi_x(0)\|.$$
Here, for the norm of the differential $D\phi_x$ we use the Poincar\'e metric on $\D$ and the Hermitian metric $\omega$ on $L_x$. 

The extremal property of the Poincar\'e metric implies that
$$\eta(x)=\sup\big\{\|D\phi(0)\|,\quad \phi:\D\to L \mbox{ holomorphic such that } \phi(0)=x\big\}.$$
Using a map sending $\D$ to a plaque, we see that the function $\eta$ is locally bounded from below by a strictly positive constant. When $X$ is compact and the leaves are hyperbolic, the classical Brody lemma implies that $\eta$ is also bounded from above.

Fix now  a distance $\dist_X$ on $X$ such that on flow boxes $\U=\B\times\T$ as above, it is locally equivalent to the distance induced by a Riemannian metric.
Here is the main theorem of this section.

\begin{theorem} \label{th_poincare_holder}
Let $(X,\Lc)$ be a smooth compact lamination by hyperbolic Riemann surfaces. Then the Poincar\'e metric on the leaves  is H\"older continuous, that is, the function $\eta$ defined above is H\"older continuous on $X$. 
\end{theorem}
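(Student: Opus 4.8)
The plan is to reduce the H\"older continuity of $\eta$ to a quantitative comparison of the universal covering maps of two nearby leaves, carried out via the Beltrami equation as announced in the introduction. Fix two points $x,y\in X$ with $\delta:=\dist_X(x,y)$ small, and set $R:=c\log(1/\delta)$ for a constant $c>0$ to be optimized at the end. The first step is geometric: using the transverse smoothness of the atlas I would transport the covering map $\phi_x$, plaque by plaque along $L_x$, into the leaf $L_y$, producing a map $\psi\colon\D_R\to L_y$ with $\psi(0)=y$ that is smooth along the leaf but not holomorphic. The crucial estimate here controls how the transverse distance between the two leaves grows as one follows a chain of plaques: compactness and transverse smoothness give a uniform bound of the form (transverse distance after $N$ plaques)$\,\le K^{N}\delta$ for some $K>1$, and since each plaque has bounded Poincar\'e diameter, a hyperbolic radius $R$ is reached after $N\approx R$ plaques. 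Choosing $R=c\log(1/\delta)$ with $c$ small keeps the transverse distance, and hence $\psi$ itself, within $\delta^{\beta}$ of $\phi_x$ on all of $\D_R$ for a suitable $\beta>0$ depending on $c$ and $K$.

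The second step estimates the Beltrami coefficient $\mu:=\dbar\psi/\partial\psi$ of $\psi$. Since $\phi_x$ is holomorphic, $\dbar\psi$ measures precisely the failure of the transverse transport to be holomorphic; transverse smoothness bounds it by the transverse distance, so that $\|\mu\|_{\infty,\D_R}\le C\delta^{\beta}$ is small. I would then invoke the measurable Riemann mapping theorem to solve $\dbar h=\mu\,\partial h$ by a quasiconformal homeomorphism $h$ of $\D_R$ fixing $0$, whose dilatation is close to $1$; the standard normalization estimates for quasiconformal maps of small dilatation show that $h$ is $C^{1}$-close to the identity near $0$, with error $O(\|\mu\|_{\infty})$. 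The corrected map $\tilde\psi:=\psi\circ h^{-1}$ is then genuinely holomorphic on $\D':=h(\D_R)$, sends $0$ to $y$, and satisfies $\|D\tilde\psi(0)\|_{\omega}=\eta(x)\bigl(1+O(\delta^{\beta})\bigr)$, since both the transport and the correction affect the derivative at the base point only through controlled errors.

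The final step compares $\tilde\psi$ with the genuine covering map $\phi_y$. Because $\D'$ is, in the Poincar\'e metric of $\D$, a topological disc of radius close to $R$, the extremal property of the Poincar\'e metric applied to $\tilde\psi$ yields $\|D\tilde\psi(0)\|_{\omega}\le\eta(y)\bigl(1+O(e^{-R})\bigr)$, the factor accounting for the difference between the Poincar\'e density of $\D'$ and that of $\D$ at the origin. Combining the two estimates gives $\eta(x)\le\eta(y)\bigl(1+O(\delta^{\beta})+O(e^{-R})\bigr)$, and interchanging the roles of $x$ and $y$ yields the reverse inequality. Since $\eta$ is bounded above and below by the Brody lemma and the local lower bound recorded before the statement, this produces $|\eta(x)-\eta(y)|\le C\bigl(\delta^{\beta}+e^{-R}\bigr)$. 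With $R=c\log(1/\delta)$ one has $e^{-R}=\delta^{c}$, and optimizing over $c$ balances the two error terms, giving $|\eta(x)-\eta(y)|\le C\,\delta^{\alpha}$ with an explicit exponent $\alpha>0$ depending only on the plaque size and the transverse expansion rate $K$ of the lamination.

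I expect the principal obstacle to lie in the first two steps: making the plaque-by-plaque transport precise enough to control simultaneously the radius $R$ up to which $\psi$ is defined and the supremum norm of its Beltrami coefficient, and in particular pinning down the geometric constant governing the transverse expansion, since it is exactly this constant that fixes the H\"older exponent. The comparison in the last step is, by contrast, a fairly soft application of Schwarz--Pick once $\tilde\psi$ is in hand.
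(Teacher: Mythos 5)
Your outline follows the paper's own strategy almost exactly: transport $\phi_x$ plaque by plaque into $L_y$ with an exponentially controlled transverse drift $K^N\delta$, correct the resulting non-holomorphic map by solving a Beltrami equation, compare the corrected holomorphic map with $\phi_y$ via the extremal property of the Poincar\'e metric on $\D_R$ (the $1+O(e^{-R})$ factor from the ratio of Poincar\'e densities), symmetrize, and balance $e^{-R}$ against $\delta^\beta$ by taking $R=c\log(1/\delta)$. The Schwarz--Pick endgame and the exponent bookkeeping are correct. However, two points that you treat as routine are exactly where the paper has to work, and as stated they are gaps.

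First, the well-definedness of $\psi$ on all of $\D_R$. Covering $\D_R$ requires on the order of $e^R$ small plaques, not $O(R)$; only a single geodesic radius is covered by $O(R)$ plaques. So you must check that the projections defined along different chains of plaques joining $0$ to the same point of $\D_R$ agree --- a holonomy-consistency statement. The paper proves this by first defining $\psi$ along each radius via a chain of length $\simeq 2d^{-1}R$ (Lemma \ref{lemma_chain}, which also gives the drift bound $e^{-10\kappa d^{-1}AR}e^{i\kappa}\le e^{-4\kappa R}$), and then showing that the constructions along two radii making a sufficiently small angle ($\lesssim d\,e^{-2R}$) use overlapping plaques on which the projections coincide. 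Without some such argument you do not have a single-valued map on $\D_R$ whose Beltrami coefficient you can even speak of. Second, your claim that $\|\mu\|_{\infty}\le C\delta^\beta$ forces the normalizing quasiconformal map $h$ to be $C^1$-close to the identity with error $O(\|\mu\|_\infty)$ is false in that generality: an $L^\infty$-small Beltrami coefficient gives only uniform (H\"older) closeness, and the solution need not be $C^1$ at all. You need a $\Cc^1$ (or $\Cc^\alpha$) bound on $\mu$, which transverse smoothness does provide and which the paper uses via the Earle--Schatz estimate $\|\sigma-\id\|_{\Cc^1}\lesssim\|\mu_\tau\|_{\Cc^1}$; alternatively, once $\tilde\psi$ is holomorphic and uniformly close to $\phi_x$, Cauchy's estimates on a fixed small disc $\D_{R_0}$ recover the comparison of derivatives at $0$ without any $C^1$ control on $h$ --- this is the route the paper takes. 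Note also that converting Euclidean closeness on $\D_R$ into Poincar\'e closeness costs a factor $e^R$, which is why the paper imposes $\delta\le e^{-2R}$; your choice of $c$ small relative to $\beta$ absorbs this, so that part of the bookkeeping survives.
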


The proof occupies the rest of this section. The result is also
valid for laminations which are transversally of class $\Cc^{2+\alpha}$ with
$\alpha>0$.
In order to simplify the notation, we embed $X$ in an $\R^N$ and use the distance $\dist_X$ induced by the Euclidean metric on $\R^N$.
Multiplying $\omega$ by a constant, we can assume that $\omega\leq \omega_P$ on leaves, i.e. $\eta\leq 1$. 
We also have $\omega_P\leq A\omega$, i.e.  $\eta\geq 1/A$,
 for some fixed constant $A\geq 1$. 
Fix also an atlas of $X$, fine enough. We will only consider finite atlases which are finer than this one. For simplicity, all the plaques we consider are small and simply connected. We also use a coordinate change on $\R^N$ and choose $A$ large enough such that $\dist_X\leq\dist_P\leq A\dist_X$ on plaques. The second inequality does not hold when we deal with singular foliations.

Let $\phi$ and $\phi'$ be two maps from a space $\Sigma$ to $X$. If $K$ is a subset of $\Sigma$, define 
$$\dist_K(\phi,\phi'):=\sup_{a\in K}\dist_X(\phi(a),\phi'(a)).$$
Consider constants $R\gg 1$ and $0<\delta\ll 1$ such that $e^{2R}\delta\leq 1$. 
We say that two points $x$ and $y$ in $X$ are {\it conformally $(R,\delta)$-close} if the following property is satisfied and if it also holds when we exchange $x$ and $y$. 

Let $\phi_x:\D\to L_x$ and $\phi_y:\D\to L_y$ be universal covering maps with $\phi_x(0)=x$ and $\phi_y(0)=y$. There is a smooth map $\psi:\overline \D_R\to L_y$ without critical point such that $\psi(0)=y$,
$\dist_{\overline \D_R}(\phi_x,\psi)\leq\delta$ and $\psi$ is $\delta$-conformal in the following sense. Since $\D_R$ is simply connected, there is a unique smooth map $\tau:\overline\D_R\to\D$ such that $\psi=\phi_y\circ\tau$ and $\tau(0)=0$. We assume that 
$\|D\tau\|_\infty\leq 2A$ and
{\it the Beltrami coefficient} $\mu_\tau$ of $\tau$ satisfies $\|\mu_\tau\|_{\Cc^1}\leq \delta$. 
Here, we consider the norm of the differential $D\tau$ with respect to the Poincar\'e metric on $\D$ and the norm of $\mu_\tau$ with respect to the Euclidean metric.
Recall also that $\mu_\tau$ is defined by
$${\partial\tau\over \partial\overline \xi} = \mu_\tau {\partial\tau\over \partial \xi}\cdot$$  

Note that the above notion is independent of the choice of $\phi_x$ and $\phi_y$ since these maps are defined uniquely up to a rotation on $\D$. We have the following important estimate.

\begin{proposition} \label{prop_quasi_close}
Let $x$ and $y$ be conformally $(R,\delta)$-close as above (in particular, $\delta\leq e^{-2R}$). There is a real number $\theta$ such that if $\phi_y'(\xi):=\phi_y(e^{i\theta}\xi)$, then
$$|\eta(x)-\eta(y)|\leq A' e^{-R} \quad \mbox{and}\quad 
\dist_{\D_{R/3}} (\phi_x,\phi_y')\leq A' e^{-R/3},$$
where $A'>0$ is a  constant independent of $R,\delta, x$ and $y$. 
\end{proposition}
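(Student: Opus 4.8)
The plan is to replace the almost-holomorphic parametrization $\tau$ by a genuinely holomorphic map via the Beltrami equation, and then to exploit the rigidity in the Schwarz--Pick lemma. First I would solve the Beltrami equation: extend $\mu_\tau$ from $\overline\D_R$ to a coefficient $\tilde\mu$ on $\D$ with $\|\tilde\mu\|_\infty\le\delta$, and take the normalized quasiconformal homeomorphism $F:\D\to\D$ with $F(0)=0$ and Beltrami coefficient $\tilde\mu$. Since $\|\tilde\mu\|_\infty\le\delta$ is small, the classical dependence of the solution on its dilatation gives $\|F-\id\|_{\Cc^0}=O(\delta)$ on $\D$ and $\|F-\id\|_{\Cc^1}=O(\delta)$ on compact subsets. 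The point of this correction is that $g:=\tau\circ F^{-1}$ is then \emph{holomorphic} on $F(\overline\D_R)$, fixes $0$, maps into $\D$, and satisfies $\phi_y\circ g=\psi\circ F^{-1}$. Because $\psi$ is $\delta$-close to $\phi_x$ on $\overline\D_R$ and $F^{-1}$ is $O(\delta)$-close to $\id$, the holomorphic map $h:=\phi_y\circ g$ is $O(\delta)$-close to $\phi_x$ in $\dist_X$ on a slightly smaller disc $\overline\D_{R'}$ with $R'=R-o(1)$.

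Next I would pin down $g'(0)$. The upper bound $|g'(0)|\le 1+O(e^{-R})$ is immediate from Schwarz--Pick applied to $g:\D_{R'}\to\D$, comparing the Poincar\'e densities of $\D_{R'}$ and $\D$ at $0$. For the matching lower bound I would use the symmetry built into the hypothesis: exchanging $x$ and $y$ and running the same correction yields a holomorphic $g':\D_{R'}\to\D$ with $g'(0)=0$ and $\phi_x\circ g'$ $O(\delta)$-close to $\phi_y$. Composing, $\phi_x\circ(g'\circ g)$ is $O(\delta)$-close to $\phi_x$ on a disc $\D_{R''}$; since $\phi_x$ is a local isometry for the Poincar\'e metric and $\dist_X\le\dist_P\le A\dist_X$ on plaques, this forces $g'\circ g$ to be $O(A\delta)$-close to $\id$ for $\dist_P$ on the fixed disc $\D_1$. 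A Cauchy estimate gives $|(g'\circ g)'(0)-1|=O(A\delta)$, and combined with the two Schwarz upper bounds this yields $|g'(0)|=1+O(e^{-R})$, using $A\delta\le Ae^{-2R}\ll e^{-R}$. With $1-|g'(0)|^2=O(e^{-R})$, the quantitative Schwarz lemma gives rigidity: writing $g'(0)=|g'(0)|e^{i\theta}$, the coefficient bounds force $|g(\xi)-e^{i\theta}\xi|=O(e^{-2R/3})$ in the Euclidean metric on $\D_{R/3}$. This $\theta$ is the one in the statement, with $\phi_y'(\xi):=\phi_y(e^{i\theta}\xi)$.

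Then I would assemble the two estimates. For $\eta$, I compare derivatives at $0$: since $h=\phi_y\circ g$ is $O(\delta)$-close to $\phi_x$ on the fixed disc $\D_1$ and both are holomorphic, Cauchy's estimate gives $\|Dh(0)-D\phi_x(0)\|=O(\delta)$ in $\R^N$; using that $x,y$ are $O(\delta)$-close and $\omega$ is transversally Lipschitz, the $\omega$-norms then satisfy $\big|\|Dh(0)\|_\omega-\eta(x)\big|=O(\delta)$. But $\|Dh(0)\|_\omega=\eta(y)\,|g'(0)|=\eta(y)\bigl(1+O(e^{-R})\bigr)$, so $|\eta(x)-\eta(y)|=O(e^{-R})$. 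For the map estimate on $\D_{R/3}$, I bound $\dist_X(\phi_x(\xi),\phi_y'(\xi))$ by $\dist_X(\phi_x(\xi),\phi_y(g(\xi)))+\dist_X(\phi_y(g(\xi)),\phi_y(e^{i\theta}\xi))$; the first term is $O(\delta)$, while for the second I use that $\phi_y$ is a Poincar\'e isometry and $\dist_X\le\dist_P$, so it is at most $\dist_P^{\D}(g(\xi),e^{i\theta}\xi)$. Since the Poincar\'e density of $\D$ is $O(e^{R/3})$ on $\D_{R/3}$, the Euclidean bound $O(e^{-2R/3})$ on $|g(\xi)-e^{i\theta}\xi|$ becomes the Poincar\'e bound $O(e^{-R/3})$, giving $\dist_{\D_{R/3}}(\phi_x,\phi_y')\le A'e^{-R/3}$. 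This also explains the two different exponents: $\eta$ only feels the derivative at $0$ (hence $e^{-R}$), whereas the map comparison over $\D_{R/3}$ suffers the blow-up of $\omega_P$ near $\partial\D_{R/3}$ (hence $e^{-R/3}$).

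I expect the main obstacle to be the quantitative quasiconformal step: producing $F$ as a self-map of $\D$ fixing $0$ with explicit $O(\delta)$ control of $F-\id$ (in $\Cc^0$ up to the relevant part of the boundary and in $\Cc^1$ on compacts), using only $\|\mu_\tau\|_{\Cc^1}\le\delta$ and $\|D\tau\|_\infty\le 2A$. The conceptual crux is the lower bound $|g'(0)|\ge 1-O(e^{-R})$, where the symmetric form of the hypothesis and the approximate inversion $g'\circ g\approx\id$ are essential; once these are in hand, the remainder reduces to Schwarz--Pick rigidity and Cauchy estimates.
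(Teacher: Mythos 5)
Your proposal is correct and rests on the same two pillars as the paper's proof: a Beltrami correction turning the quasiconformal $\tau$ into a holomorphic map close to it, followed by a quantitative Schwarz rigidity statement (the paper isolates this as Lemma \ref{lemma_iden_close}, with exactly the $e^{-2R/3}$ bound on $\D_{R/3}$ that you quote). The genuine divergence is in how the crucial lower bound $|g'(0)|\geq 1-O(e^{-R})$ is obtained. The paper never composes the two corrections: it bounds $\|D\widetilde\phi_y(0)\|$ from above by $(1+O(e^{-R}))\,\eta(y)$ using the \emph{extremal property} of the Poincar\'e metric (Schwarz for holomorphic discs in the leaf), bounds it from below by $\eta(x)-O(e^R\delta)$ via Cauchy, closes the loop with the symmetric inequality $\eta(y)\leq\eta(x)+O(e^{-R})$, and only then reads off $1-\lambda\lesssim e^{-R}$; note in particular that the $\eta$-estimate comes first and does not need the derivative bound. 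You instead get the derivative bound first, from $g'\circ g\approx\id$ and a Cauchy estimate, and then deduce the $\eta$-estimate from $\|Dh(0)\|=\eta(y)|g'(0)|$. Your route is a legitimate alternative use of the symmetry of the hypothesis, but it carries one delicate step the paper's route avoids: upgrading ``$\phi_x\circ(g'\circ g)$ is $\dist_X$-close to $\phi_x$'' to ``$g'\circ g$ is $\dist_P$-close to $\id$'' is not automatic, since $\phi_x$ is only a covering map; you need a connectedness argument from the fixed point $0$ together with a uniform scale on which $\phi_x$ is injective into a plaque where $\dist_P\leq A\dist_X$. Also, your ``$O(\delta)$-close'' claims away from the origin should really be $O(e^R\delta)$ (the Euclidean bound on $F^{-1}-\id$ costs a factor of the Poincar\'e density near $\partial\D_R$), which is harmless only because of the standing hypothesis $e^{2R}\delta\leq 1$ --- the paper is explicit about this loss. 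What the paper's extremality argument buys is exactly the avoidance of these two points; what yours buys is that it would survive in settings where no extremal characterization of $\eta$ is available.
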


In what follows, 
$\lesssim$ denotes an inequality up to a multiplicative constant independent of $R,\delta,x$ and $y$.
We will need the following quantitative version of Schwarz's lemma.

\begin{lemma} \label{lemma_iden_close}
Let $\widetilde\tau:\D_R\to \D$ be a holomorphic map such that $\widetilde\tau(0)=0$. Write $D\widetilde\tau(0)=\lambda e^{i\theta}$ with $\lambda>0$ and $\theta\in\R$. Assume that $1-\lambda\lesssim e^{-R}$. Then, we have
$$\dist_P(\widetilde\tau(\xi),e^{i\theta}\xi)\lesssim e^{-R/3} \quad \mbox{for}\quad  \xi\in\D_{R/3}.$$ 
\end{lemma}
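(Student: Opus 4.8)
\emph{Proof sketch.} First I would strip off the rotation and the domain radius by rescaling. Since a rotation of $\D$ is a Poincar\'e isometry, setting $g(w):=e^{-i\theta}\widetilde\tau(rw)$ with $r:=\tanh(R/2)$ (so that $\D_R=r\D$) reduces everything to a holomorphic self-map $g:\D\to\D$ with $g(0)=0$ and $g'(0)=r\lambda=:\mu>0$; indeed $\dist_P(\widetilde\tau(\xi),e^{i\theta}\xi)=\dist_P(g(w),rw)$ for $\xi=rw$. Schwarz's lemma gives $\mu\le 1$, and since $1-r\lesssim e^{-R}$ together with the hypothesis $1-\lambda\lesssim e^{-R}$ one gets $1-\mu\lesssim e^{-R}$. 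Thus the statement becomes a quantitative Schwarz lemma: a self-map of $\D$ fixing $0$ whose derivative $\mu$ there is exponentially close to $1$ must be exponentially close to the identity on $\D_{R/3}$.

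Next I would introduce $G(w):=g(w)/w$, holomorphic on $\D$, satisfying $|G|\le 1$ by Schwarz and $G(0)=\mu$. Applying the Schwarz--Pick inequality to $G$ places $G(w)$ in the closed pseudohyperbolic ball of hyperbolic radius $d:=\dist_P(0,w)$ centred at $\mu$. A direct computation of the Euclidean shape of this ball (its extreme points on the real axis are the M\"obius images $(\mu\pm t)/(1\pm\mu t)$ with $t=\tanh(d/2)$) yields the two estimates I need: on one hand $|G(w)-1|\lesssim(1-\mu)e^{d}\lesssim e^{d-R}$, and on the other hand $1-|G(w)|\gtrsim(1-\mu)e^{-d}$, so the image stays away from $\partial\D$. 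What I will really use from this second fact is the contraction property $\dist_P(g(w),0)\le\dist_P(w,0)=d$, which keeps $g(w)$ at hyperbolic distance $\lesssim R/3$ from the origin.

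Then I would specialise to $\xi\in\D_{R/3}$, i.e. $|\xi|\le\tanh(R/6)$, where $d=\dist_{\D_R}(0,\xi)=R/3+O(1)$. Writing $g(w)-rw=w(G(w)-1)+w(1-r)$ and using $|G(w)-1|\lesssim e^{d-R}$ and $1-r\lesssim e^{-R}$ gives the Euclidean bound $|g(w)-rw|\lesssim e^{d-R}\lesssim e^{-2R/3}$. Since both $rw=\xi$ and $g(w)=e^{-i\theta}\widetilde\tau(\xi)$ have modulus at most some $s_0$ with $1-s_0^2\gtrsim e^{-R/3}$ (for $\xi$ by its membership in $\D_{R/3}$, for $g(w)$ by the contraction property above), the straight segment between them stays in $\{|z|\le s_0\}$ and I can convert the Euclidean estimate into the hyperbolic one via $\dist_P(a,b)\le 2|a-b|/(1-s_0^2)$. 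This yields $\dist_P(g(w),rw)\lesssim e^{-2R/3}/e^{-R/3}=e^{-R/3}$, which is the claim; the same computation with a general radius $\rho$ in place of $R/3$ produces $e^{2\rho-R}$, so any $\rho<R/2$ works and $R/3$ is a convenient safe choice.

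The routine ingredients are Schwarz's lemma and the definition of $g$; the main obstacle is the bookkeeping of the \emph{three} scales that all sit near the boundary---the domain radius $r=\tanh(R/2)$, the target radius $\tanh(R/6)$, and the defect $1-\lambda$---and turning the Schwarz--Pick containment into the two quantitative estimates above. The delicate point is that the Euclidean bound $|g(w)-rw|\lesssim e^{-2R/3}$ is useless by itself, since it is compared against a hyperbolic distance whose normalising factor $1-s_0^2$ is itself exponentially small; the argument only closes because the single radius $R/3$ simultaneously forces the error to be of order $e^{-2R/3}$ and keeps $g(w)$ at distance $\gtrsim e^{-R/3}$ from $\partial\D$, and the quotient of these competing exponentials is exactly what produces the stated exponent.
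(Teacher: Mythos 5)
Your argument is correct and is essentially the paper's own proof: your $G(w)=g(w)/w$ is exactly the paper's auxiliary function $u(\xi)=\xi^{-1}\widetilde\tau(\xi)$, the Schwarz--Pick containment of $G(\D_{R/3})$ in the hyperbolic ball of radius $\approx R/3$ about $\mu$ (whose Euclidean distance to $1$ is $\lesssim(1-\mu)e^{R/3}\lesssim e^{-2R/3}$) is the paper's key step, and the final conversion of the Euclidean bound into a hyperbolic one by dividing by $1-s_0^2\gtrsim e^{-R/3}$ matches the paper's use of the explicit distance formula. The only cosmetic difference is that you rescale the domain $\D_R=r\D$ to $\D$ and track the $1-r$ term explicitly, whereas the paper composes with a slight contraction and absorbs that term implicitly.
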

\proof
We can assume that $\theta=0$. 
Since $R$ is large, we can compose $\widetilde\tau$ with a slight contraction in order to assume that $\widetilde\tau$ is defined from $\D$ to $\D$. The computation is essentially the same. So, we still have $1-\lambda\lesssim e^{-R}$ and $1-\lambda>0$, by Schwarz's lemma. 

Consider the holomorphic function $u:\D\to \D$ defined by
$$u(\xi):=\xi^{-1}\widetilde\tau(\xi) \quad \mbox{and} \quad u(0):=\lambda.$$ 
Observe that $1-|\xi|\gtrsim e^{-R/3}$ for $\xi\in \D_{R/3}$ and $\big|1-\overline{u(\xi)}|\xi|^2\big|\gtrsim 1-|\xi|$. Therefore, 
$$\dist_P(\widetilde\tau(\xi),\xi)=2\tanh^{-1}{|\xi||1-u(\xi)| \over \big|1-\overline{u(\xi)}|\xi|^2\big|}\lesssim e^{R/3}|1-u(\xi)|.$$
It is enough to show that $|1-u|\lesssim e^{-2R/3}$ on $\D_{R/3}$.
  
Since $u$ is holomorphic, it contracts the Poincar\'e metric on $\D$. So, it sends $\D_{R/3}$ to the disc of radius $R/3$ centered at $u(0)=\lambda$. We obtain the desired inequality using that  $\dist_P(0,\lambda)\geq R+o(R)$ because 
$0<1-\lambda \lesssim e^{-R}$. 
\endproof

\noindent
{\bf Proof of Proposition \ref{prop_quasi_close}.} 
We first construct a homeomorphism $\sigma:\D_R\to\D_R$, close to the identity, such that $\widetilde\tau:=\tau\circ \sigma^{-1}$ is holomorphic.  For this purpose,  it is enough to construct $\sigma$ satisfying the following Beltrami  equation
$${\partial\sigma\over\partial\overline\xi}=\mu_\tau {\partial\sigma\over\partial\xi}\cdot$$
Indeed, it is enough to compute the derivatives of $\tau=\widetilde\tau\circ\sigma$ and to use the above equation together with the property that $\|\mu_\tau\|_\infty<1$ in order to obtain that $\overline\partial \widetilde\tau=0$. 

It is well-known from the Ahlfors-Bers theory, see e.g. \cite[p.181]{EarleSchatz}, that there is a solution such that
$$\|\sigma-\id\|_{\Cc^1}\lesssim \|\mu_\tau\|_{\Cc^1}\lesssim\delta,$$
where we use the Euclidean metric on $\D_R$. We deduce that
$$\|\sigma^{-1}-\id\|_{\Cc^1}\lesssim\delta\lesssim e^{-2R}.$$
Moreover, we can also compose $\sigma$ with an automorphism of $\D_R$ in order to get that $\sigma(0)=0$.  Now, it is not difficult to see that 
$$\dist_P(\sigma^{-1}(\xi),\xi)=2\tanh^{-1}{|\sigma^{-1}(\xi)-\xi|\over \big|1-\overline{\sigma^{-1}(\xi)}\xi \big|}\lesssim e^R\delta\quad  \mbox{for}\quad  \xi\in\D_R.$$ 

Define $\widetilde\phi_y:=\phi_y\circ \widetilde\tau$. This is a holomorphic map. Recall that 
$\psi=\phi_y\circ \tau$ and $\|D\tau\|_\infty\leq 2A$. Therefore, since $\phi_y$ is isometric with respect to the Poincar\'e metric, we obtain from the previous estimates that
$$\dist_{\D_R}(\widetilde\phi_y,\psi)\lesssim  e^R\delta$$
which implies that
$$\dist_{\D_R}(\phi_x,\widetilde\phi_y)\lesssim e^R\delta.$$

For a constant $R_0>0$ small enough, $\phi_x$ and $\widetilde\phi_y$ send $\D_{R_0}$ to the same flow box where Cauchy's formula implies that 
$$\|D\phi_x(0)\|-\|D\widetilde\phi_y(0)\|\lesssim e^R\delta.$$
The extremal property of the Poincar\'e metric yields 
$$\|D\widetilde\phi_y(0)\|\leq {1\over r}\|D\phi_y(0)\|=(1+O(e^{-R}))\|D\phi_y(0)\|.$$
Recall that $\|D\phi_x(0)\|=\eta(x)$ and $\|D\phi_y(0)\|=\eta(y)$. We deduce that
$$\|D\phi_y(0)\|\geq \|D\widetilde\phi_y(0)\|+O(e^{-R}\eta(y)).$$
Therefore,
$$\eta(x)-\eta(y)\leq \|D\phi_x(0)\|-\|D\widetilde\phi_y(0)\| +O(e^{-R}\eta(y))\leq  e^R\delta+O(e^{-R}\eta(y)).$$
By symmetry we get 
$$|\eta(x)-\eta(y)|\lesssim  e^R\delta+  e^{-R}(\eta(x)+\eta(y)).$$
This, combined  with the hypothesis that  $e^{2R}\delta\leq 1$ and  ${1\over A}\leq  \eta\leq 1,$  implies  the first estimate in the proposition.

We deduce from the above inequalities that
$$\|D\phi_y(0)\| - \|D\widetilde\phi_y(0)\| \lesssim e^R\delta +e^{-R}(\eta(x)+\eta(y)).$$
Write $D\widetilde\tau(0)=\lambda e^{i\theta}$ with $\lambda\geq 0$ and $\theta\in\R$. 
Since  $e^{2R}\delta\leq 1$ and  ${1\over A}\leq  \eta\leq 1,$ and $\phi_y$ is isometric with respect to the Poincar\'e metric, we obtain that $1-\lambda \lesssim e^{-R}$. By Lemma \ref{lemma_iden_close}, we have
$$\dist_P(\widetilde\tau(\xi),e^{i\theta}\xi)\lesssim e^{-R/3} \quad \mbox{for}\quad  \xi\in\D_{R/3}.$$ 
Define $\phi_y'(\xi):=\phi_y(e^{i\theta}\xi)$. Since $\phi_y$ is isometric with respect to the Poincar\'e metric, we obtain that 
$$\dist_{\D_{R/3}} (\phi'_y,\widetilde\phi_y)\lesssim e^{-R/3}.$$
This, combined  with the above estimate on the distance between $\phi_x$ and $\widetilde\phi_y,$ implies the result. 
\hfill $\square$

\bigskip

We continue the proof of Theorem \ref{th_poincare_holder}. 
Fix a finite atlas $\Uc^l$ fine enough, another finer atlas $\Uc^n$ and a third one $\Uc^s$ which is finer than $\Uc^n$. Flow boxes and plaques of $\Uc^s$, $\Uc^n$ and $\Uc^l$ are said to be {\it small, normal and large} respectively.  
Moreover, we can construct these atlases so that the following property is true for a fixed constant $0<d\ll A^{-2}$ and for the distance
$\dist_X$ on plaques:
\begin{enumerate}
\item[(A1)] Any disc of diameter $d$ in a plaque is contained in a small plaque; small (resp. normal) plaques are of diameter less than $2d$ (resp. $10^4dA$); the intersection of any large plaque with any flow box is contained in a plaque of this box.
\end{enumerate}

Moreover, we can construct these atlases so that the following properties are satisfied. 
To each small flow box $\U^s$, we can associate a normal flow box $\U^n$ and a large flow box $\U^l$ such that $\U^s\Subset\U^n\Subset \U^l$ and for all plaques $P^s,P^n,P^l$ in $\U^s$, $\U^n$ and $\U^l$ respectively, the following holds:
\begin{enumerate}
\item[(A2)] If $P^n$ and $P^l$ are adjacent, then $P^n\subset P^l$ and $\dist_X(\partial P^l,P^n)\geq 10^6dA^2$;
\item[(A3)] If $P^s$ and $P^n$ are adjacent, then $P^s\subset P^n$ and $\dist_X(\partial P^n, P^s)\geq 10^2dA$;
\item[(A4)] The projection $\Phi$ from $P^n$ to $P^l$ is well-defined, smooth and its image is compact in $P^l$; the projection of $P^s$ in $P^l$ is compact in the projection of $P^n$.
\end{enumerate}
Here, we use $\dist_X$ in order to define the projection.
Fix a constant $\kappa > 1$ large enough such that
\begin{enumerate}
\item[(A5)] If $x$ is a  point in $P^n$, then for the $\Cc^2$-norm on $P^n$
$$\|\Phi-\id\|_{\Cc^2}\leq e^\kappa \dist_X(x,\Phi(x)).$$
\end{enumerate}

Note that in what follows, to each small flow box $\U^s$ we fix a choice of the associated boxes $\U^n$ and $\U^l$ and we will only consider projections from plaques to plaques as described above. Moreover, a small, normal or large plaque is associated to a unique small, normal or large flow box. 
We have the following property for some fixed constant $\epsilon_0>0$ small enough
\begin{enumerate}
\item[(A6)] Two points at distance less than $\epsilon_0$ belong to the same small flow box and 
$\dist_X(\partial \U^n,\U^s)>\epsilon_0$ for $\U^s, \U^n$ as above.
\end{enumerate}

Consider now two points $x, y$ such that $\dist_X(x,y)\leq e^{-10\kappa d^{-1}AR}$ for $R>0$ large enough. 
We will show that $|\eta(x)-\eta(y)|\lesssim e^{-R}$. This implies that 
$\eta$ is H\"older continuous with H\"older exponent $(10\kappa d^{-1}A)^{-1}$. 

Since $x$ and $y$ are close, by (A6), they belong to a small flow box $\U^s$. Consider the projection $x'$ of $x$ to the normal plaque containing $y$. We have $\dist_X(x,x')\leq e^{-10\kappa d^{-1}AR}$ and $\dist_X(x',y)\leq 2e^{-10\kappa d^{-1}AR}$. By Proposition \ref{prop_quasi_close}, it is enough to check that $x$ and $y$ are conformally $(R,\delta)$-close with $\delta:=e^{-2R}$. So, 
we have to construct the map $\psi$ satisfying the definition 
of conformally $(R,\delta)$-close points as above. 

We claim that it is enough to consider the case where $y=x'$.
Indeed, if we can construct a map $\psi$ for $x,x'$, there is a point $a\in\D$ such that $\dist_P(0,a)\simeq \dist_P(x',y)\lesssim e^{-10\kappa d^{-1}AR}$ and $\psi(a)=y$. There is an automorphism $u:\D_R\to\D_R$ very close to the identity such that $u(0)=a$. Therefore, if we are able to construct the map $\psi$ for $x$ and $x'$, we obtain such a map for $x,y$ by replacing $\psi$ by $\psi\circ u$. Since $u$ is very close to the identity, the estimates do not change much. So, we can assume that $y=x'$.

The map $\psi$ will be obtained by composing $\phi_x$ with  local projections
 from the leaf $L_x$ to the leaf $L_y$. The main problem is to show that the
map is well-defined.
Let $P^s_1$ be a small plaque containing $x$ and $\U_1^s$ the associated small flow box. It is clear that $y$ belong to the associated normal flow box $\U^n_1$. Denote by  
$Q^n_1$ the plaque of $\U_1^n$ containing  $y$. The 
projection $\Phi_1$ from  $P_1$ to $Q_1^n$ is well-defined as described above. 

Consider a chain $\Pc=\{P^s_1,\ldots,P^s_m\}$ of $m$ small plaques with $m\leq 3d^{-1}AR$.  
Denote by $\U_i^s$ the small flow box associated to $P^s_i$ and $\U_i^n, \U_i^l$ the normal and large flow boxes associated to $\U_i^s$. We have the following lemma.

\begin{lemma} \label{lemma_chain}
There is a unique chain
$\Qc=\{Q^n_1,\ldots, Q^n_m\}$ such that $Q_i^n$ is a plaque of $\U_i^n$. 
Moreover, the projection $\Phi_i$ from $P^s_i$ to $Q^n_i$ satisfies $\Phi_i=\Phi_{i+j}$ on $P^s_i\cap P^s_{i+j}$ for 
$0\leq j\leq 10A$. We also have $\dist_X(P^s_i,Q^n_i)\leq e^{-4\kappa R}$ for $i=1,\ldots,m$. 
\end{lemma}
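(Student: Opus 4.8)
The plan is to build the chain $\Qc$ and the projections $\Phi_i$ by induction on $i$, carrying along the quantitative bound $\delta_i:=\|\Phi_i-\id\|_{\Cc^0(P^s_i)}\leq e^{-4\kappa R}$ (which in particular bounds $\dist_X(P^s_i,Q^n_i)$ from above); uniqueness and the compatibility $\Phi_i=\Phi_{i+j}$ then follow from the smallness of $\delta_i$ together with the nesting axioms. The base case $i=1$ is already at hand: $Q^n_1$ is the plaque of $\U^n_1$ through $y$, and after the reduction $y=x'$ one has $\Phi_1(x)=y$, so (A5) gives $\delta_1\leq e^\kappa\dist_X(x,y)\leq e^\kappa e^{-10\kappa d^{-1}AR}$, far below $e^{-4\kappa R}$.

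For the inductive step, assume $Q^n_i$ and $\Phi_i\colon P^s_i\to Q^n_i$ are built with $\delta_i\leq e^{-4\kappa R}$. Since $P^s_i$ and $P^s_{i+1}$ are adjacent, I choose $z\in P^s_i\cap P^s_{i+1}$; then $\dist_X(z,\Phi_i(z))\leq\delta_i$ is so small that, by (A6) and the nesting (A3), $\Phi_i(z)$ lies in the normal flow box $\U^n_{i+1}$. I define $Q^n_{i+1}$ to be the plaque of $\U^n_{i+1}$ through $\Phi_i(z)$ and let $\Phi_{i+1}$ be the transversal projection $P^s_{i+1}\to Q^n_{i+1}$. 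Both $\Phi_i(z)$ and $\Phi_{i+1}(z)$ lie in $Q^n_{i+1}$ within distance $O(\delta_i)$ of $z$, so (A5) applied at the single point $z$ yields
$$\delta_{i+1}=\|\Phi_{i+1}-\id\|_{\Cc^0}\leq e^\kappa\dist_X(z,\Phi_{i+1}(z))\leq e^{2\kappa}\delta_i.$$
Iterating from the base case gives $\delta_i\leq e^{2\kappa i}\dist_X(x,y)\leq e^{6\kappa d^{-1}AR}e^{-10\kappa d^{-1}AR}=e^{-4\kappa d^{-1}AR}\leq e^{-4\kappa R}$ for all $i\leq m\leq 3d^{-1}AR$, using $d^{-1}A\geq 1$. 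This is the asserted distance estimate; moreover, since distinct plaques of a fixed $\U^n_i$ are transversally separated by far more than $e^{-4\kappa R}$, the plaque $Q^n_i$ lying within $\delta_i$ of $P^s_i$ is unique, which forces uniqueness of the whole chain $\Qc$.

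It remains to prove $\Phi_i=\Phi_{i+j}$ on $P^s_i\cap P^s_{i+j}$ for $0\leq j\leq 10A$. By (A1) small plaques have diameter $\leq 2d$, so any $10A$ consecutive links of the chain span a set of diameter $\leq 20dA$; since (A3) keeps $P^s_i$ at distance $\geq 10^2dA$ from $\partial P^n_i$, all of $P^s_i,\dots,P^s_{i+10A}$ lie in the single normal plaque $P^n_i$, and by (A4) their projections lie in a single large plaque $P^l_i$. Within this large plaque the projection onto $L_y$ is the transversal (holonomy) projection, which is canonical and independent of the normal sub-box used to compute it; hence the locally defined maps $\Phi_{i+j}$ all restrict to this one projection on the overlaps of their domains, giving the compatibility.

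The crux of the argument is the inductive distance bound. A priori the transverse displacement between $L_x$ and $L_y$ is amplified by the holonomy at each of the $\sim d^{-1}AR$ plaques, and if it ever reached the scale $e^{-4\kappa R}$ the projections governed by (A2)--(A4) would cease to be defined and the construction would collapse. This is exactly why the two points are started at the much finer scale $e^{-10\kappa d^{-1}AR}$: the per-plaque amplification is only $e^{2\kappa}$ by (A5), so the accumulated factor $e^{2\kappa m}\leq e^{6\kappa d^{-1}AR}$ is dominated by the initial smallness, keeping the entire chain inside the regime where every projection used above is legitimate.
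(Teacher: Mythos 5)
Your inductive construction of the chain, the per-step amplification of the transverse displacement via (A5), and the compatibility argument (nesting $10A$ consecutive small plaques into one normal plaque on the $x$-side and their images into one large plaque on the $y$-side, where the projection is canonical) all track the paper's proof and are essentially correct. A minor remark: since $\Phi_{i+1}$ is the nearest-point projection and $\Phi_i(z)\in Q^n_{i+1}$, one has $\dist_X(z,\Phi_{i+1}(z))\leq\dist_X(z,\Phi_i(z))$, so the per-step factor is $e^{\kappa}$ rather than your $e^{2\kappa}$; your weaker bound still closes the induction, so this is harmless.

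The genuine gap is in your uniqueness argument. You assert that distinct plaques of a fixed normal flow box $\U^n_i$ are transversally separated by far more than $e^{-4\kappa R}$, so that the plaque lying within $\delta_i$ of $P^s_i$ is unique. This is false for a general lamination: the transversal $\T_i$ is an arbitrary locally compact metric space, and plaques (even of the single leaf $L_y$) can accumulate on one another --- already for an irrational linear foliation of the torus every flow box contains infinitely many plaques of $L_y$ within any prescribed distance of $P^s_i$, and nothing in (A1)--(A6) excludes this. The uniqueness claimed in the lemma is uniqueness of the \emph{chain}, and it must come from adjacency rather than from metric separation: since $Q^n_{i+1}$ is required to intersect $Q^n_i$, properties (A1) and (A2) force $Q^n_{i+1}\subset Q^l_i$, and the intersection of the large plaque $Q^l_i$ with the flow box $\U^n_{i+1}$ is contained in a single plaque of that box by (A1); hence $Q^n_{i+1}$ is completely determined by $Q^n_i$, and uniqueness propagates along the chain from the given $Q^n_1$. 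This is the argument the paper uses, and it cannot be bypassed, because the uniqueness statement is exactly what is invoked later to show that the map $\psi$ is well defined.
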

\proof
Note that since $\dist_X(x,y)\leq e^{-10\kappa d^{-1}AR}$ and $m\leq 3d^{-1}AR$, the last assertion of the lemma is a consequence of the previous ones and the property (A5) applied to points in the intersections $P_i^s\cap P_{i+1}^s$.
Indeed, by induction on $i$, we obtain
$$\dist_X(P_i^s,Q_i^n)\leq e^{-10\kappa d^{-1}AR} e^{i\kappa}\leq e^{-4\kappa R}.$$

We prove the other assertions by induction on $m$. Assume these properties for $m-1$, i.e. we already have the existence and the uniqueness of $Q_i^n$ for $i<m$. We have to construct $Q_m^n$ and to prove its uniqueness.  

Let $Q^l_{m-1}$ be the large plaque associated to $Q^n_{m-1}$. If $Q_m^n$ exists, since it intersects $Q^n_{m-1}$, by (A1) and (A2), it is contained in $Q^l_{m-1}$ and then it is the intersection of $Q^l_{m-1}$ with $\U_m^n$. The uniqueness of $Q_m^n$ follows. 

Fix a point $z$ in $P^s_{m-1}\cap P^s_m$. Since $\Phi_{m-1}(z)$ is close to $z$, by (A6), $\Phi_{m-1}(z)$ belongs to $\U^n_m$. Define $Q_m^n$ as the plaque of $\U_m^n$ containing this point. So, $Q_m^n$ intersects $Q^n_{m-1}$ and $Q_0^n,\ldots ,Q_m^n$ is a chain. 
By (A4), $\Phi_{m-1}(z)$ is also the projection of $z$ to $Q^l_{m-1}$. Since $Q^n_m$ is contained in $Q^l_{m-1}$, necessarily, the projection $\Phi_m(z)$ of $z$ to $Q^n_m$ coincides with $\Phi_{m-1}(z)$.

Arguing in the same way, we obtain that $P_{i+j}^s$ is contained in $P_i^n$, $Q_{i+j}^n$ is contained in $Q_i^l$ when $j\leq 10A$ and then we obtain that $\Phi_i=\Phi_{i+j}$ on $P^s_i\cap P^s_{i+j}$.
\endproof 

\noindent
{\bf End of the proof of Theorem \ref{th_poincare_holder}.} 
We have to show
that $x$ and $y$ are conformally $(R, \delta)$-close for a map $\psi$ that we are going to
construct. We call also small plaque any open set in $\D$ which is sent bijectively by $\phi_x$ to a small plaque in $L_x$. Let $\gamma$ be a radius of $\D_R$. 
We divide $\gamma$ into equal intervals $\gamma_i$ of Poincar\'e length $\simeq d/2$. By (A1), since $\dist_X\leq\dist_P\leq A\dist_X$ on plaques, we can find a small plaque $P_i$ containing $\gamma_i$ such that $\dist_P(\gamma_i,\partial P_i)\geq d/(4A)$. So, we have a chain of $m\simeq 2d^{-1}R$ plaques which covers $\gamma$. 
Define $P^s_i:=\phi_x(P_i)$, $Q_i^n$ as in Lemma \ref{lemma_chain} and $\psi:=\Phi_i\circ \phi_x$. We will check later that $\psi$ is well-defined on $\D_R$.

It follows from the last assertion in Lemma \ref{lemma_chain} that $\dist_{\D_R}(\phi_x,\psi)\leq e^{-4\kappa R}\leq\delta$. 
We also deduce from (A5) that $\psi$ has no critical point and that its Beltrami coefficient satisfies 
$\|\mu\|_{\Cc^1}\lesssim e^{-4\kappa R}$ for the Poincar\'e metric on $\D$ and then 
$\|\mu\|_{\Cc^1}\leq e^{-2\kappa R}\leq \delta$ for the Euclidean metric on $\D_R$. Here, we use that $\phi_x$ and $\phi_y$ are isometries with respect to the Poincar\'e metric.  

The property $\| D\tau\|_\infty\leq 2A$ is also clear since we have $\|D\Phi_i\|_\infty\leq 2$, locally
$\tau=\phi_y^{-1}\circ \psi=\phi_y^{-1}\circ\Phi_i\circ \phi_x$ and $\dist_X\leq \dist_P\leq A\dist_X$ on plaques.
This implies that $x,y$ are conformally $(R,\delta)$-close. It remains to check that $\psi$ is well-defined.

If $P_i\cap P_{i+j}\not=\varnothing$, then by (A1), we have $\diam_P(P_i\cup P_{i+j})\leq 4dA$ because $\dist_P\leq A\dist_X$. Hence, $j\leq 10A$. By Lemma \ref{lemma_chain}, $\psi$ is well-defined on $P_i\cup P_{i+j}$. So, $\psi$ is well-defined on the union $W$ of $P_i$ and this union contains the radius $\gamma$.  We will show later that $\psi$ extends 
to the union $W'$ of all small plaques which intersect $\gamma$. 
Of course, we only use projections from plaques to plaques in order to define the extension of $\psi$. So, the extension is unique. Let $\gamma'$ be another radius of $\D_R$ such that the angle between $\gamma$ and $\gamma'$ is small enough, e.g. less than $d/(10^2Ae^{2R})$. Then, $\gamma'$ is contained in $W'$. Observe that if we repeat the same construction of $\psi$ for $\gamma'$, the plaques $P_i'$ used to cover $\gamma'$ intersect  $\gamma$ because $\dist_P(\partial P_i',\gamma_i')\geq d/(4A)$. Therefore, the obtained values of $\psi$ on $\gamma'$ coincide with the above extension to $W'$. A simple compactness argument implies the existence of a well-defined map $\psi$ on $\D_R$. 

We check now that $\psi$ can be extended from $W$ to $W'$. 
Consider a plaque $P$ which intersects $\gamma_i$ and a plaque $\widetilde P$ which intersects $\gamma_{i+j}$. Assume that $P\cap \widetilde P\not=\varnothing$. It suffices to check that $\psi$ can be extended  to $W\cup P\cup \widetilde P$. As above, we obtain that $j\leq 10A$. This allows us by the previous arguments to see that $P^s:=\phi_x(P)$, $\widetilde P^s:=\phi_x(\widetilde P)$ and $P^s_i,\ldots P^s_{i+j}$ belong to the normal plaque $P^n_i$. The projection from $P^n_i$ to $Q^l_i$ gives us the unique extension of $\psi$. The proof of Theorem \ref{th_poincare_holder} is now complete.
\hfill $\square$

\section{Hyperbolic entropy for foliations} \label{section_entropy}

In this section, we introduce a general notion of entropy, which permits to
 describe some natural situations in dynamics and in foliation theory. We will show that the entropy of any compact smooth lamination by hyperbolic Riemann surfaces is finite.

Let $X$ be a metric space endowed with a distance $\dist_X$. Consider a family $\Dc=\{\dist_t\}$ of distances on $X$ indexed by $t\in \R^+$. We can also replace $\R^+$ by $\N$ and in practice we often have that $\dist_0=\dist_X$ and that 
$\dist_t$ is  increasing  with respect to  $t\geq 0$. 
In several  interesting situations the metrics $\dist_t$ are continuous with respect to $\dist_X.$

Let $Y$ be a non-empty subset of $X$. Denote by $N(Y,t,\epsilon)$ the minimal number of balls of radius $\epsilon$ with respect to the distance $\dist_t$ needed to cover $Y.$ Define the {\it  entropy} of $Y$ with respect to $\Dc$ by
$$h_\Dc(Y):=\sup_{\epsilon>0} \limsup_{t\rightarrow\infty} {1\over t} \log  N(Y,t,\epsilon).$$
When $Y=X$ we will denote by $h_\Dc$ this entropy. When $X$ is not compact, we can also consider the supremum of the entropies on compact subsets of $X$. Note that if $Y$ and  $Y'$ are two subsets of $X$, then 
$h_\Dc(Y\cup Y')=\max(h_\Dc(Y),h_\Dc(Y'))$.

Observe that when $\dist_t$ is increasing,  $N(Y,t,\epsilon)$ is  increasing  with respect to  $t\geq 0$. Moreover,   
$$ \limsup_{t\rightarrow\infty} {1\over t} \log  N(Y,t,\epsilon)$$  is  increasing
when $\epsilon$  decreases. So, in the above definition, we can replace $\sup_{\epsilon>0}$ by $\lim_{\epsilon\to 0^+}$. 
If $\Dc=\{\dist_t\}$  and $\Dc'=\{\dist'_t\}$ are two families of distances on $X$  such that  $\dist'_t\geq  A\dist_t$ for all $t$ with  a fixed constant $A>0$,  then  $h_{\Dc'}\geq h_\Dc$.

A subset $F\subset Y$ is said to be {\it $(t,\epsilon)$-dense in $Y$} if the balls of radius $\epsilon$ with respect to $\dist_t$, centered at a point in $F$, cover $Y$. Let $N'(Y,t,\epsilon)$ denote the minimal number of points in a $(t,\epsilon)$-dense subset of $Y$. 

Two points $x$ and $y$ in $X$ are said to be {\it $(t,\epsilon)$-close} if $\dist_t(x,y)\leq\epsilon$. 
A subset $F\subset X$ is said to be {\it $(t,\epsilon)$-separated} if for all distinct points $x,y$ in $F$ we have $\dist_t(x,y)>\epsilon$. Let $M(Y,t,\epsilon)$ denote the maximal number of points in a $(t,\epsilon)$-separated family $F\subset Y$. 
The  proof of the  following  proposition is  immediate.

\begin{proposition}\label{prop_comparison} 
We have
$$N(Y,t,\epsilon)\leq N'(Y,t,\epsilon)\leq M(Y,t,\epsilon)\leq N(Y,t,\epsilon/2).$$
In particular, 
$$h_\Dc(Y)=\sup_{\epsilon>0} \limsup_{t\rightarrow\infty} \frac{1}{ t} \log  N'(Y,t,\epsilon)=\sup_{\epsilon>0} \limsup_{t\rightarrow\infty} \frac{1}{ t} \log  M(Y,t,\epsilon).$$
\end{proposition}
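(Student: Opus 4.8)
The plan is to prove the chain of four quantities by establishing the three inequalities separately, in the order written, and then to deduce the two entropy identities by a squeezing argument. The one thing I would keep firmly in mind throughout is where the centers of the $\dist_t$-balls are allowed to lie: in $N(Y,t,\epsilon)$ they are arbitrary, whereas in $N'(Y,t,\epsilon)$ and $M(Y,t,\epsilon)$ the relevant points must belong to $Y$. This asymmetry is exactly what makes the first and third inequalities go in the directions claimed.

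For $N(Y,t,\epsilon)\leq N'(Y,t,\epsilon)$, I would take a $(t,\epsilon)$-dense set $F\subset Y$ of minimal cardinality $N'(Y,t,\epsilon)$; by definition the $\dist_t$-balls of radius $\epsilon$ centered at the points of $F$ cover $Y$, and this is one particular cover, so $N(Y,t,\epsilon)\leq|F|$. For $N'(Y,t,\epsilon)\leq M(Y,t,\epsilon)$, I would take a maximal $(t,\epsilon)$-separated set $F\subset Y$, of cardinality $M(Y,t,\epsilon)$: by maximality, every $y\in Y$ satisfies $\dist_t(x,y)\leq\epsilon$ for some $x\in F$, for otherwise $F\cup\{y\}$ would still be $(t,\epsilon)$-separated, contradicting maximality; hence $F$ is $(t,\epsilon)$-dense and $N'(Y,t,\epsilon)\leq|F|$. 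For $M(Y,t,\epsilon)\leq N(Y,t,\epsilon/2)$, I would fix a cover of $Y$ by $N(Y,t,\epsilon/2)$ balls of $\dist_t$-radius $\epsilon/2$ and take any $(t,\epsilon)$-separated $F\subset Y$: two points of $F$ lying in the same ball would, by the triangle inequality, be at $\dist_t$-distance at most $\epsilon/2+\epsilon/2=\epsilon$, which is excluded, so each ball meets $F$ in at most one point and $|F|\leq N(Y,t,\epsilon/2)$; taking the maximum over $F$ gives the inequality.

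For the entropy identities I would apply $\tfrac{1}{t}\log(\cdot)$ to the four-term chain and pass to $\limsup_{t\to\infty}$, obtaining
$$\limsup_{t\to\infty}\tfrac{1}{t}\log N(Y,t,\epsilon)\leq \limsup_{t\to\infty}\tfrac{1}{t}\log N'(Y,t,\epsilon)\leq \limsup_{t\to\infty}\tfrac{1}{t}\log M(Y,t,\epsilon)\leq \limsup_{t\to\infty}\tfrac{1}{t}\log N(Y,t,\epsilon/2).$$
Taking $\sup_{\epsilon>0}$, the outermost expressions built from $N(\cdot,\cdot,\epsilon)$ and $N(\cdot,\cdot,\epsilon/2)$ have the same supremum (equivalently, by the monotonicity in $\epsilon$ noted before the proposition, the same limit as $\epsilon\to 0^+$), so both equal $h_\Dc(Y)$; the inner quantities involving $N'$ and $M$ are thereby squeezed to the same common value, which is the asserted identity.

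I do not expect a genuine obstacle here; the result is essentially the classical comparison of spanning and separated sets in Bowen's framework, and the only points requiring care are the maximality argument in the second inequality and the halving of the radius forced in the third inequality, which is precisely why equality of the three entropies holds only after the supremum over $\epsilon$ rather than at a fixed $\epsilon$.
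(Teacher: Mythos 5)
Your argument is correct and is exactly the standard Bowen-type comparison of covers, spanning sets, and separated sets that the paper has in mind when it declares the proof ``immediate'' and omits it. All three inequalities and the squeezing step for the entropies are handled properly, including the maximality argument and the triangle-inequality halving of the radius.
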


 The following proposition gives a simple criterion for the finiteness of entropy.
We will see that  this  criterion applies for smooth laminations by Riemann surfaces.
 
 \begin{proposition} \label{prop_abstract_finite_entropy}
 Assume that there are positive constants $A$ and $m$ such that
for every $\epsilon>0$ small enough $X$ admits a covering by less than $A\epsilon^{-m}$ 
balls of radius $\epsilon$ for the distance $\dist_X$. Assume also that  
$$\dist_t\leq e^{ct+d}\dist_X+\varphi(t)$$
for some constants $c,d\geq 0$ and  a function $\varphi$  with $\varphi(t)\to 0$ as $t\to\infty$. Then, the entropy $h_\Dc$ is at most equal to $mc$.
\end{proposition}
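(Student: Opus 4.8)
The plan is to bound $N(X,t,\epsilon)$ directly by pushing a sufficiently fine $\dist_X$-covering forward into a $\dist_t$-covering. The hypothesis $\dist_t\leq e^{ct+d}\dist_X+\varphi(t)$ says precisely that the identity map $(X,\dist_X)\to(X,\dist_t)$ is Lipschitz with constant $e^{ct+d}$ up to the additive error $\varphi(t)$. Consequently, a $\dist_X$-ball of small radius $\rho$ centered at a point $z$ is contained in the $\dist_t$-ball of radius $e^{ct+d}\rho+\varphi(t)$ centered at the same point $z$. So to produce a $\dist_t$-covering by balls of radius $\epsilon$ it suffices to cover $X$ by $\dist_X$-balls of a radius $\rho=\rho(t)$ chosen so that $e^{ct+d}\rho(t)+\varphi(t)\leq\epsilon$, and then to count those $\dist_X$-balls using the first hypothesis.

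Concretely, I would fix $\epsilon>0$ and, using that $\varphi(t)\to 0$, pick $t_0$ with $\varphi(t)\leq\epsilon/2$ for all $t\geq t_0$. For such $t$ I set
$$\rho(t):=\frac{\epsilon}{2}\,e^{-(ct+d)},$$
so that $e^{ct+d}\rho(t)+\varphi(t)\leq \epsilon/2+\epsilon/2=\epsilon$; hence every $\dist_X$-ball of radius $\rho(t)$ lies inside a $\dist_t$-ball of radius $\epsilon$. By the first hypothesis, for $t$ large enough that $\rho(t)$ is small, $X$ is covered by fewer than $A\rho(t)^{-m}$ such $\dist_X$-balls, and therefore
$$N(X,t,\epsilon)\leq A\,\rho(t)^{-m}=A\,2^m e^{md}\,\epsilon^{-m}\,e^{mct}.$$

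It then remains to take logarithms, divide by $t$, and let $t\to\infty$: the factor $A\,2^m e^{md}\epsilon^{-m}$ is constant in $t$, so its contribution to $\tfrac{1}{t}\log N(X,t,\epsilon)$ tends to $0$, leaving
$$\limsup_{t\to\infty}\frac{1}{t}\log N(X,t,\epsilon)\leq mc.$$
Since $\epsilon>0$ was arbitrary, taking $\sup_{\epsilon>0}$ yields $h_\Dc\leq mc$. I do not expect a genuine obstacle here; the only point requiring a little care is the additive term $\varphi(t)$, which I handle by reserving half of the budget $\epsilon$ for it and exploiting that $\varphi(t)$ is eventually smaller than $\epsilon/2$. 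This is harmless because the entropy only depends on the behavior as $t\to\infty$, so discarding the finitely many small values of $t$ (where $\rho(t)$ might fail to be small enough for the first hypothesis) changes nothing in the $\limsup$.
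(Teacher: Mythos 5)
Your proof is correct and follows essentially the same route as the paper's: both reserve half of $\epsilon$ for the additive error $\varphi(t)$ and then use the hypothesis $\dist_t\leq e^{ct+d}\dist_X+\varphi(t)$ to reduce the count to a $\dist_X$-covering at scale comparable to $e^{-ct-d}\epsilon$, which the first hypothesis bounds by a constant times $e^{mct}$. The only immaterial difference is that you bound the covering number $N(X,t,\epsilon)$ directly, whereas the paper bounds the separated-set number $M(X,t,\epsilon)$ and relies on Proposition \ref{prop_comparison}.
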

\proof
Fix a constant $\epsilon$ small enough and consider only $t$ large enough so that $\varphi(t)\leq \epsilon/2$. If $x$ and $y$ are $\epsilon$-separated for $\dist_t$, then they are ${1\over 2}e^{-ct-d}\epsilon$-separated for $\dist_X$. 
In particular, they cannot belong to a same ball of radius ${1\over 4}e^{-ct-d}\epsilon$ with respect to $\dist_X$. Therefore,
it follows from the hypothesis that  
$$M(X,t,\epsilon)\leq A4^m e^{mct+md} \epsilon^{-m}.$$ 
We easily deduce that $h_\Dc\leq mc$. 
\endproof

Consider now a general dynamical situation. We call {\it time space} a data $(\Sigma,\dist_\Sigma,0_\Sigma,G)$, where 
$(\Sigma,\dist_\Sigma)$ is a metric space, $0_\Sigma$ is a point of $\Sigma$ that we call {\it time zero} and $G$ a group of isometries of $\Sigma$ with $0_\Sigma$ as a common fixed point. The elements of $G$ are called {\it time reparametrizations}. 

In practice, the metric $\dist_\Sigma$ is complete, $G$ is either $\{\id\}$ or the group of all the isometries fixing $0_\Sigma$ and preserving the orientation of $\Sigma$. The space $\Sigma$ can be 
\begin{enumerate}
\item[(G1)] one of the sets $\N$, $\Z$, $\R^+$, $\R$, $\C,$ $\R^p,$  $\C^p$ endowed with the usual distance;
\item[(G2)]  or a group with a finite system of generators  stable  under  inversion;
\item[(G3)] or the unit disc $\D$ in $\C$ with the Poincar{\'e} metric.
\end{enumerate}
For laminations by Riemann surfaces, we will consider essentially the last case
where $G$ the group of
rotations around $0\in\D$. Note that the case where $\Sigma$ is another symmetric domain may be also of interest.

Let $(X,\dist_X)$ be a metric space as above. Define for a subset $K\subset \Sigma$ and
two maps $\phi$, $\phi'$ from $\Sigma$ to $X$
$$\dist^G_K(\phi,\phi'):= \inf_{\sigma,\sigma'\in
  G}\ \sup_{s\in K}\ \dist_X\big(\phi\circ\sigma (s),\phi'\circ \sigma'(s)\big).$$
When $G=\{\id\}$, we have
$$\dist^G_K(\phi,\phi')=\dist_K(\phi,\phi'):=\sup_{s\in K} \dist_X\big(\phi(s),\phi'(s)\big).$$
For $t>0$ let
$$\Sigma_t:=\big\{s\in\Sigma,\ \dist_\Sigma(0_\Sigma,s)< t\big\}.$$
This set is invariant under the action of $G$. We define
$$\dist_{t}(\phi,\phi'):=\dist^G_{\Sigma_t}(\phi,\phi')=
\inf_{\sigma\in  G}\ \sup_{s\in \Sigma_t}\ \dist_X\big(\phi\circ\sigma (s),\phi'(s)\big).$$

Consider now a family $\Mc$ of maps from $\Sigma$ to $X$ satisfying the following properties:
\begin{enumerate}
\item[(M1)] for every $x\in X$ there is a map $\phi\in\Mc$ such that $\phi(0_\Sigma)=x$;
\item[(M2)] if $\phi,\phi'$ are two maps in $\Mc$ such that $\phi(0_\Sigma)=\phi'(0_\Sigma)$,
then $\phi=\phi'\circ\tau$ for some $\tau\in G$.
\end{enumerate}
So, $X$  is  "laminated" by images of $\phi\in \Mc:$ 
for every $x\in X$ there is a unique (up to time reparametrizations) 
map $\phi\in\Mc$ which sends the time zero $0_\Sigma$ to $x$. 
 We  get then  a natural family  $\{\dist_t\}_{t\geq 0}$ on $X.$

Define for $x$ and $y$ in $X$
$$\dist_t(x,y):=\dist_{t}(\phi_x,\phi_y),$$
where $\phi_x$, $\phi_y$ are in $\Mc$ such that $\phi_x(0_\Sigma)=x$ and
$\phi_y(0_\Sigma)=y$. The definition is independent of the choice of $\phi_x$ and $\phi_y$. It is clear that $\dist_0=\dist_X$ and that the family $\Dc:=\{\dist_t\}_{t\geq 0}$ is increasing when $t$ increases. For $Y\subset X$, denote by  
$h(Y)$ the associated entropy, where 
we drop the index $\Dc$ for simplicity. 

Observe  that  the entropy depends on  the metrics on $\Sigma$ and on $X$. Nevertheless, the entropy does not change if we modify $\dist_X$ on a compact set keeping the same topology or if we replace $\dist_X$ by another distance $\dist_X'$ such that $A^{-1}\dist_X\leq\dist_X'\leq A\dist_X$ for some constant $A>0$. 

We review some classical situations where we assume that $G=\{\id\}$.

\begin{example} \label{ex_iterate}
\rm
Consider a   continuous map $f:X\rightarrow X$ and $f^n:=f\circ\cdots\circ
f$ ($n$ times) the iterate of order $n$ of $f$. For $x\in X$ define a map
$\phi_x:\N\rightarrow X$ by $\phi_x(n):=f^n(x)$. For $\Sigma=\N$ and
$\Mc$ the family of these maps $\phi_x$, we obtain the {\it topological entropy} of $f$. More precisely, two points $x$ and $y$ in $X$ are
$(n,\epsilon)$-separated if 
$$\dist_n(x,y):=\max\limits_{0\leq  i\leq  n-1} \dist_X(f^i(x),f^i(y))>\epsilon.$$ 
If $f$ is  $K$-Lipschitz  continuous then $ \dist_n\leq K^n\dist_X.$

A subset $F$ of $X$ is
$(n,\epsilon)$-separated if its points are mutually
$(n,\epsilon)$-separated and the topological entropy of $f$ is given
by the formula
$$h(f):=\sup_{\epsilon>0}\limsup_{n\rightarrow\infty}{1\over
  n} \log\sup\big\{\# F,\ F\subset X \quad
(n,\epsilon)\mbox{-separated}\big\}.$$
This  notion was  introduced by Adler, Konheim and Mc Andrew.
The above formulation  when  $f$ is  uniformly continuous  was introduced by Bowen \cite{Bowen}, see  also Walters   \cite{Walters} or Katok-Hasselblatt  \cite{KatokHasselblatt}. 
When $f$  is  only continuous,  Bowen considers the entropy of compact sets  and then takes the  supremum over 
compacta.  

When  $f$  is a  meromorphic  map  on a compact  K\"{a}hler manifold $M$
with indeterminacy  set $I$,  we  can define $X:=M\setminus \bigcup_n(f^{-1})^n(I).$ Then  $f$ is  in general not uniformly continuous
on $X.$ However, it is  shown in \cite{DinhSibony}  that  the  entropy of $f|_X$  as  defined  above is   finite. 
Indeed, it is  dominated by the logarithm of the maximum of dynamical  degrees, see also Gromov \cite{Gromov} and Yomdin \cite{Yomdin} when $f$ is holomorphic or smooth. 

We can define the entropy of a map in  a
more general context.  Suppose  that $f$ is  only defined in $U\subset X.$ We  define 
$\dist_n(x,y)$ only when  $f^j(x)$ and $f^j(y)$   are well-defined for $j< n.$
Let $U_\infty:=\cap f^{-n}(U).$ All the  $\dist_n(\cdot,\cdot)$ are well-defined on $U_\infty\times U_\infty$ and we can consider the entropy of $f$ on $U_\infty.$ This situation occurs naturally in holomorphic dynamics.  See, e.g.
the case of  horizontal-like maps \cite{DinhNguyenSibony1}.  Note that the case where $\Sigma$ is a tree is also interesting because it allows to consider the dynamics of correspondences. 
\end{example}

\begin{example}\label{ex_flow}\rm
Consider a flow $(\Phi_t)_{t\in\R}$ on a compact Riemannian manifold
$X$. Define for $x\in X$ a map $\phi_x:\R^+\rightarrow X$ by
$\phi_x(t):=\Phi_t(x)$.  If $\Sigma=\R^+$ and $\Mc$  is the family 
$\{\phi_x\}$,   then the distance  $\dist_t$ with $t\geq 0$ is given by
$$\dist_t(x,y):=\sup_{0\leq  s < t} \dist(\Phi_{s}(x),\Phi_{s}(y)).$$ 
We obtain the classical entropy of the flow
$(\Phi_t)_{t\in\R}$ which is also equal to the topological entropy $h(\Phi_1)$ of $\Phi_1$, see
e.g. Katok-Hasselblatt  \cite[Chapter 3, p.112]{KatokHasselblatt}.
This notion can be extended without difficulty to complex flows. 
\end{example}

\begin{example}\rm
(see Candel-Conlon \cite{CandelConlon1, CandelConlon2}  and Walczak \cite{Walczak})   Let $\Gamma$ be a group with a finite system of generators $A$. We
assume that if $g\in A$ then $g^{-1}\in A$.  The distance $\dist_\Gamma$ between $g$
and $g'$ in $\Gamma$ is the minimal number $n$ such that we can
write $g^{-1}g'$  as a composition of $n$ elements in $A$.
The neutral element $\chac_\Gamma$ is considered as the
origin. Consider an action of $\Gamma$ on the left of a metric space
$X$, that is, a representation of $\Gamma$ in the group of bijections
from $X$ to $X$. Define for $x\in X$ the map $\phi_x:\Gamma\rightarrow X$ by
$\phi_x(g):=gx$. For $\Sigma:=\Gamma$ and $\Mc$ the family 
$\{ \phi_x\}$, we obtain the entropy of the action of $\Gamma$ on $X$.   More  precisely,  let $\Gamma_n$  be the ball  of center $\chac_\Gamma$  and  radius $n$  in $\Gamma$
with respect to the metric  introduced above.  Then
$$\dist_n(x,y):=\sup\limits_{g\in\Gamma_n}\dist_X(gx,gy).$$

The entropy depends on the metric on $\Gamma$,
i.e.  on the choice  of the system of generators $A$. We will denote it by $h_A$.
If $A'$ is another system of generators, there is a constant $c\geq 1$ independent of the action of $\Gamma$ on $X$ such that
$$c^{-1}h_{A'}\leq h_A\leq c h_{A'}.$$

The  function describing  the growth of $\Gamma$ is
$$
\lov_A(\Gamma):=\limsup_{n\to\infty}  {\log {\# \Gamma_n}\over n}\cdot
$$
It  also depends on the choice of generators.  If the map $x\mapsto gx$ is uniformly Lipschitz for each generator $g$,  
we can compare
$\lov_A(\Gamma)$ and $h_A.$ We get that if $X$ has  a finite box measure then
$h_A\leq    c\cdot \lov_A(\Gamma)$
for some positive constant $c$.

When $\Gamma$ is a hyperbolic group in the sense of Gromov \cite{Gromov2}, its Cayley graph can be compactified and the action of $\Gamma$ extends to the boundary $X$ of the Cayley graph. This allows to define a natural notion of entropy for $\Gamma$ which depends on the choice of generators.

The notion of entropy can be extended to any semi-group endowed with an invariant
distance and then covers Examples \ref{ex_iterate} and  \ref{ex_flow}. 
\end{example}

We now consider the case of laminations, where the group $G$ of time re-parametrization is not trivial.

\begin{example}\rm 
Let $(X,\Lc)$ be a compact Riemannian laminations  without singularities in a Riemannian manifold $M$. Assume that the lamination is transversally smooth.
 Ghys, Langevin and Walczak   \cite{GhysLangevinWalczak}    introduced and
studied a notion of geometric entropy $h_{\rm GLW}$. It can be summarized as follows. Define 
that $x$ and  $y$   are 
$(R,\epsilon)$-separated if $\delta_R(x,y)>\epsilon$ where $\delta_R$ is defined  below.   

Denote by $\exp_x:\R^n\to L_x$ the exponential map for $L_x$ such that $\exp_x(0)=x$. Here, we identify the tangent space of $L_x$ at $x$ with $\R^n$. So, $\exp_x$ is defined uniquely up to an element of the group $\SO(n)$. Define 
$$\delta'_R(x,y):=\inf_h  \sup_{\xi\in B_R} \dist_X (\exp_x(\xi),h(\xi)) \quad \mbox{and} \quad
\delta_R(x,y):=\delta_R'(x,y)+\delta_R'(y,x).$$
Here, $B_R$ denotes the ball of radius $R$ in $\R^n$ and $h:B_R\to L_y$ is a continuous map
with $h(0)=y$. This function $\delta_R$ measures the spreading of leaves. It seems that in general $\delta_R$ does not satisfy the triangle inequality. 
We have
$$h_\GLW:=\sup_{\epsilon>0} \limsup_{R\to \infty} {1\over R} \log\big\{ \#F,\quad F\subset X\quad (R,\epsilon)\mbox{-separated as above}\big\}.$$

In our approach, choose $\Sigma=\R^n$, $G=\SO(n)$ and $\Mc$ the family of all the exponential maps considered above. This allows us to define an entropy $h(\Lc)$. 
Indeed, we define
$$d_R(x,y):=\inf _{g\in\SO(n)} \sup_{\xi\in B_R}\dist_X\big(\exp_x(g(\xi)),\exp_y(\xi)\big).$$

It is not difficult to see that
$$  h_{\GLW}(\Lc)\leq h(\Lc) .$$
\end{example}

In the rest of this section, we consider a Riemann surface lamination $(X,\Lc)$. We assume that all its leaves are hyperbolic. Choose $(\Sigma,0_\Sigma)=(\D,0)$ endowed with the Poincar\'e metric. 
The group $G$ is the family of all rotations around 0. Define $\Mc$ as the family of all the universal covering maps $\phi:\D\to L$ associated to a leaf $L$. We obtain from the abstract formalism  
an entropy that we denote by $h(\Lc)$. We call it {\it the hyperbolic entropy} of the lamination. 

\begin{example} \rm \label{ex_disc}
Consider the case where $X$ is the Poincar\'e disc or a compact hyperbolic Riemann surface endowed with the Poincar\'e metric. Fix a constant $\epsilon>0$ small enough. Lemma \ref{lemma_poincare_separated} below shows that the property that $x,y$ are $(R,\epsilon)$-separated is almost equivalent to the property that $\dist_X(x,y)\simeq e^{-R}$. It follows that the entropy of a compact subset of $X$ is equal to its box dimension. 

When $X$ is a compact smooth lamination, we can choose a metric on $X$ which is equivalent to the Poincar\'e metric on leaves. We see that moving along the leaves contributes 2 to the entropy of the lamination. This property is new in comparison with the theory of iteration of maps, but it is not verified for general non-compact laminations.  
\end{example}

\begin{lemma} \label{lemma_poincare_separated}
Let $0<\epsilon<1$ be a fixed constant small enough. Then, there exist a constant $A\geq 1$ satisfying the following properties for all points $a$ and $b$ in $\D$.
\begin{enumerate}
\item[(i)] If $\dist_P(a,b)\leq A^{-1} e^{-R}$, then there are two automorphisms
$\tau_a,\tau_b$ of $\D$ such that $\tau_a(0)=a$, $\tau_b(0)=b$ and $\dist_P(\tau_a(\xi),\tau_b(\xi))\leq \epsilon$ for every $\xi\in \D_R$. 
\item[(ii)] If $Ae^{-R}\leq \dist_P(a,b)\leq 1$, then for all automorphisms
$\tau_a,\tau_b$ of $\D$ such that $\tau_a(0)=a$ and $\tau_b(0)=b$, we have $\dist_P(\tau_a(\xi),\tau_b(\xi))>\epsilon$ for some $\xi\in\D_R$. 
\end{enumerate}
\end{lemma}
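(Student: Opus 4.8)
The plan is to reduce the two-automorphism problem to a single automorphism and then to run explicit computations based on the identity
$$\cosh\dist_P(z,w)=1+\frac{2|z-w|^2}{(1-|z|^2)(1-|w|^2)},$$
which is a direct consequence of $\tanh(\dist_P(z,w)/2)=|(z-w)/(1-\bar z w)|$. First I would set $g:=\tau_b^{-1}\circ\tau_a$. Since $\tau_b$ is a Poincar\'e isometry, $\dist_P(\tau_a(\xi),\tau_b(\xi))=\dist_P(g(\xi),\xi)$ and $\dist_P(g(0),0)=\dist_P(a,b)=:\rho$. Using the Cartan decomposition $\Aut(\D)=KAK$, I would write $g=R_{\theta_1}T_\rho R_{\theta_2}$, where $R_\theta(\xi)=e^{i\theta}\xi$ and $T_\rho(\xi)=(\xi+t)/(1+t\xi)$, with $t=\tanh(\rho/2)$, is the transvection of length $\rho$ along the real axis (its $A$-length equals $\dist_P(g(0),0)=\rho$ because $K$ fixes $0$). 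Substituting $\zeta=R_{\theta_2}(\xi)$ and using that rotations preserve $\dist_P$ and $\D_R$, the quantity to control becomes $\dist_P(R_\Theta T_\rho(\zeta),\zeta)$ for $\zeta\in\D_R$, with $\Theta:=\theta_1+\theta_2$. As $\tau_a,\tau_b$ range over all admissible choices, $\Theta$ ranges over all of $\R$; hence (i) only requires exhibiting one good $\Theta$, whereas (ii) must handle all $\Theta$ at once.

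For part (i) I would take $\Theta=0$, making $g$ a transvection along a geodesic through $0$. A short computation gives $T_\rho(\zeta)-\zeta=t(1-\zeta^2)/(1+t\zeta)$, and the isometry identity $1-|T_\rho(\zeta)|^2=|T_\rho'(\zeta)|(1-|\zeta|^2)$ turns the distance formula into
$$\cosh\dist_P(T_\rho(\zeta),\zeta)-1=\frac{2t^2}{1-t^2}\cdot\frac{|1-\zeta^2|^2}{(1-|\zeta|^2)^2}.$$
On $\D_R$ one has $|1-\zeta^2|^2/(1-|\zeta|^2)^2\le(1+|\zeta|)^2/(1-|\zeta|)^2\le 4(1-r)^{-2}\le 4e^{2R}$, while $t\le\rho/2\le\tfrac12 A^{-1}e^{-R}$, so the right-hand side is $\lesssim A^{-2}$ uniformly in $\zeta\in\D_R$. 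Choosing $A\gtrsim\epsilon^{-1}$ makes it at most $\cosh\epsilon-1$, which gives $\dist_P(\tau_a(\xi),\tau_b(\xi))\le\epsilon$ throughout $\D_R$.

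For part (ii), if $\rho\ge\epsilon$ then $\xi=0$ already has displacement $\rho>\epsilon$, so I may assume $\rho<\epsilon$, which forces $R$ large and $r=\tanh(R/2)$ close to $1$. Here is where I expect the main obstacle: a single rotation parameter $\Theta$ must be shown incapable of keeping the displacement small everywhere on $\D_R$. My plan is to test the two antipodal boundary points $\zeta=\pm ir$. A computation gives $T_\rho(\pm ir)=u\pm iv=se^{\pm i\beta}$ with $u,v>0$, and the isometry identity yields $1-s^2\asymp 1-r^2\lesssim e^{-R}$. Thus $R_\Theta T_\rho(\pm ir)=se^{i(\Theta\pm\beta)}$, with targets $re^{\pm i\pi/2}$. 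Feeding this into the distance formula, $\dist_P\le\epsilon$ at $ir$ forces the angular error $|(\Theta+\beta)-\pi/2|\lesssim\epsilon(1-r^2)\lesssim\epsilon e^{-R}$, and likewise $|(\Theta-\beta)+\pi/2|\lesssim\epsilon e^{-R}$ at $-ir$. Subtracting eliminates $\Theta$ and leaves $\pi-2\beta\lesssim\epsilon e^{-R}$ (the value $\pi-2\beta$ lies in $(0,\pi)$, so no $2\pi$-ambiguity arises once it is known to be small).

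The contradiction then comes from an independent lower bound on the same quantity: since $\tfrac\pi2-\beta=\arctan\big(t(1+r^2)/(r(1-t^2))\big)$ and $(1+r^2)/r\ge 2$, one gets $\pi-2\beta=2\arctan\big(t(1+r^2)/(r(1-t^2))\big)\ge 2\arctan(2t)\gtrsim t\gtrsim\rho\ge Ae^{-R}$. Comparing $Ae^{-R}\lesssim\pi-2\beta\lesssim\epsilon e^{-R}$ yields $A\lesssim\epsilon$, which is false once $A$ is chosen large relative to the fixed $\epsilon$. Hence for such $A$ at least one of the displacements at $\pm ir$ exceeds $\epsilon$; since $\pm ir$ lie on $\partial\D_R$, I would push them slightly inside and invoke continuity (the strict gap coming from $A$ being large leaves room for this). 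Taking $A$ to be the maximum of the thresholds produced in parts (i) and (ii) completes the argument.
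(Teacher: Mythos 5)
Your proof is correct. Part (i) is essentially the paper's computation: the paper takes $b=0$, $\tau_b=\id$, $\tau_a(z)=(z+a)/(1+az)$ and bounds $2\tanh^{-1}\big(|\tau(z)-z|/|1-\bar z\tau(z)|\big)$ directly, which is the same estimate your $\cosh$-identity gives after the reduction to $g=\tau_b^{-1}\circ\tau_a$ with $\Theta=0$. Part (ii) is where you genuinely diverge. The paper also normalizes $b=0$ and tests only the single boundary point $z=ir$: for a general rotation $\tau_a(z)=\tau(e^{i\theta}z)$ it must compare $\tau(ir)$ with $w=e^{-i\theta}ir$, and it does so by a hyperbolic-geometry argument (the geodesic projection $z_2$ of $\tau(ir)$ onto $i\R$, the intersection $z_3$ of that geodesic with $\partial\D_R$, and a comparison of the distances from $w$ and from $ir$ to $z_3$ along $\partial\D_R$). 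You instead test the two antipodal points $\pm ir$ and convert the problem into an angular rigidity statement: closeness at $+ir$ pins $\Theta+\beta$ near $\pi/2$ up to $O(\epsilon e^{-R})$, closeness at $-ir$ pins $\Theta-\beta$ near $-\pi/2$, and subtracting forces $\pi-2\beta\lesssim \epsilon e^{-R}$, which contradicts the explicit lower bound $\pi-2\beta\gtrsim t\gtrsim Ae^{-R}$. Your route buys a fully explicit, inequality-by-inequality argument in which the $KAK$ decomposition makes transparent that only $\Theta=\theta_1+\theta_2$ matters; the price is the bookkeeping you correctly carry out (the radial error $|s-r|\lesssim t^2e^{-R}$, the mod-$2\pi$ ambiguity, the restriction to $\rho<\epsilon$ so that $t\lesssim\epsilon$). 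Two cosmetic points: at $\rho=\epsilon$ exactly the displacement at $0$ is $\epsilon$, not $>\epsilon$, so split the cases as $\rho\le\epsilon$ versus $\rho>\epsilon$ (your main argument already covers $\rho\le\epsilon$); and the use of the boundary points $\pm ir$ is harmless since a contradiction derived from the hypothesis ``displacement $\le\epsilon$ on $\overline{\D_R}$'' (which follows from the hypothesis on $\D_R$ by continuity) is all you need.
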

\proof
(i) Since we use here invariant metrics, it is enough to consider the case where $b=0$ and $0\leq a<1$. 
We can also assume that $|a|\leq A^{-1}e^{-R}$, where $A\geq 1$ is a fixed large constant depending on $\epsilon$.  Consider the automorphisms $\tau_b:=\id$ and $\tau_a:=\tau$ with
$$\tau(z):={z+a\over 1+az}\cdot$$
We compare them on $\D_R=r\D$, where $e^R=(1+r)/(1-r)$. 

If $r$ is not close to 1, the Poincar\'e metric is comparable with the Euclidean metric on $\D_R$ and on $\tau(\D_R)$ and it is not difficult to see that $\tau$ is close to $\id$ on $\D_R$. So, we can assume that $r$ is close to 1. 

We have $|a|\ll 1-r$ and for $|z|=|x+iy|\leq r$
$$\dist_P(z,\tau(z))=2\tanh^{-1}{|\tau(z)-z|\over |1-\overline z\tau(z)|}\simeq 2\tanh^{-1}{a|1-z^2|\over \sqrt{(1-|z|^2)^2+4a^2y^2}}\cdot$$
Since $|ay|\leq |a| \lesssim 1-|z|$ and $1-|z|^2\simeq 1-|z|$, the last expression is of order
$$\tanh^{-1}{a|1-z^2|\over 1-|z|}\ll 1 \quad \mbox{when}\quad |z|\leq r.$$ 
This give the first assertion of the lemma.

\medskip

(ii) As above, we can assume that $b=0$.
We can replace $\tau_a$ by $\tau_a\circ \tau_b^{-1}$ in order to assume that $\tau_b=\id$. Fix a constant $A>0$ large enough depending on $\epsilon$ and assume that $Ae^{-R}\leq \dist_P(0,a)\leq 1$.
So, $R$ is necessarily large and $r$ is close to 1. We first consider the case where $\tau_a=\tau$. For $z=ir$, the above computation gives
$$\dist_P(\tau(z),z)\simeq 2\tanh^{-1}{2a\over \sqrt{4(1-r^2)+4a^2}}\cdot$$
This implies that $\dist_P(\tau(z),z)> 4\epsilon$ if $A$ is large enough.

Consider now the general case where $\tau_a$ differs from $\tau$ by a rotation. There is a constant $-\pi\leq\theta\leq\pi$ such that $\tau_a(z)=\tau(e^{i\theta}z)$. Without loss of generality, we can assume that $-\pi\leq \theta\leq 0$. It is enough to show that  
$$\dist_P(\tau_a(w),w)=\dist_P(\tau(z),w)>\epsilon$$ 
for $z=ir$ and $w=e^{-i\theta}z$.
 
Observe that since $\tau$ is conformal and fixes $\pm 1$, it preserves the circle arc 
through $-1,1$ and a point in $i\R$. Moreover, we easily check that the real part of $\tau(z)$ is positive and that $\tau(z)$ is outside $\D_R$. 
So, the geodesic joining $z_1:=\tau(z)$ and its projection $z_2$ (with respect to the Poincar\'e metric) to $i\R$ intersects $\partial \D_R$ at a point $z_3$. 
If $\dist_P(z_1,z_2)>\epsilon$, since $w$ is on the left side of $i\R$, we have necessarily $\dist_P(z_1,w)>\epsilon$. 
Otherwise, we have $\dist_P(z_1,z_3)\leq\epsilon$ and since $\dist_P(z,z_1)>4\epsilon$, we deduce that 
$\dist_P(z,z_3)>3\epsilon$. Now, since $w,z,z_3$ are in $\partial\D_R$, we see, using   an automorphism which sends $z_3$ to 0, that $w$ is further than $z$ from $z_3$, i.e. $\dist_P(w,z_3)>\dist_P(z,z_3)$. It follows that $\dist_P(w,z_3)>3\epsilon$ and hence $\dist_P(w,z_1)>\epsilon$. This completes the proof of the lemma.
 \endproof

\begin{example}\rm
Let $S$ be  a hyperbolic compact Riemann surface. Let $\Gamma$ denote the group of deck transformations of $S$, i.e. $\Gamma:\simeq\pi_1(S)$.   Assume also that $\Gamma$ acts on a compact metric space $N$ as a group of homeomorphisms, that is, we have a representation of $\Gamma$ into $\Homeo(N)$. 
For example, we can take $N=\partial\D$ or $\P^1$.
Consider now the suspension which gives us a lamination by Riemann surfaces. 
More precisely,  let $\widetilde{S}\simeq\D$  be the universal  covering  of $S.$ The group $\Gamma$ acts on 
$\widetilde S\times N$ by homeomorphisms
$$(\tilde{s},x)\mapsto \big(\gamma \tilde{s},\gamma x\big)\quad \mbox{with}\quad \gamma\in\Gamma.$$  This action is proper and discontinuous. The quotient $X:=\Gamma\backslash (\widetilde{S}\times N)$ is compact and
has  a  natural structure of a lamination by Riemann surfaces. Its leaves are the images of $\widetilde{S}\times \{x\}$ under the canonical projection $\pi:\  \widetilde{S}\times N\to X.$ 

Observe that the entropy of this lamination depends only on the representation of $\Gamma$ in $\Homeo(N)$. So, we call it the entropy of the representation. It would be interesting to study these quantities as functions on moduli spaces of representations. 

The  entropy of the group $\Gamma$ with respect to a system of generators is comparable with the entropy of the lamination. In particular, we  can have  laminations with positive  entropy and with a transverse measure. More precisely, let $f$ be a homeomorphism with positive entropy on a compact manifold $N$. It induces an action of $\Z$ on $N$. We then obtain an action of $\Gamma$ using a group morphism $\Gamma\to \Z$. Indeed, if $g$ is the genus of $S$, the group $\pi_1(S)$ is generated by $4g$ elements denoted by $a_i,b_i,a_i^{-1},b_i^{-1}$,
$1\leq i\leq g$, with the relation $a_1b_1a_1^{-1}b_1^{-1}\ldots a_gb_ga_g^{-1}b_g^{-1}=1$. So, we can send $a_1$ to the homeomorphism $f$ and the others $a_i$, $b_i$ to the identity.
Notice that in this case all positive $\ddbar$-closed currents directed by the lamination are closed, see e.g. 
\cite{FSW}.
\end{example}

\begin{theorem}\label{th_finite_entropy_regular}
Let $(X,\Lc)$ be a smooth   
compact lamination  by hyperbolic  Riemann surfaces.
Then, the entropy $h(\Lc)$ is finite. 
\end{theorem}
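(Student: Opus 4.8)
The plan is to verify the two hypotheses of Proposition \ref{prop_abstract_finite_entropy} for the hyperbolic distance family $\Dc=\{\dist_t\}$: a polynomial covering bound for $(X,\dist_X)$ and an exponential comparison $\dist_t\leq e^{ct+d}\dist_X+\varphi(t)$. The covering bound is immediate from compactness: since $X$ is embedded in some $\R^N$, covering $\R^N$ by a grid of cubes of side $\epsilon$ shows that $X$ is covered by at most $A\epsilon^{-m}$ balls of radius $\epsilon$ for $\dist_X$, with $m=N$ (the finite box dimension of $X$ would give a sharper $m$, but mere finiteness is all that is needed). So the whole task reduces to establishing the comparison inequality.

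For the comparison I would recycle the estimates already proved in Section \ref{section_poincare}. Set $C:=10\kappa d^{-1}A$. The construction in the proof of Theorem \ref{th_poincare_holder} shows that whenever $\dist_X(x,y)\leq e^{-CR}$ with $R$ large, the points $x$ and $y$ are conformally $(R,\delta)$-close with $\delta=e^{-2R}$. Proposition \ref{prop_quasi_close} then produces a rotation angle $\theta$ with
$$\dist_{\D_{R/3}}(\phi_x,\phi_y')\lesssim e^{-R/3}, \qquad \phi_y'(\xi)=\phi_y(e^{i\theta}\xi).$$
Since $\D_{R/3}$ is rotation invariant, the substitution $\xi\mapsto e^{-i\theta}\xi$ turns this into a bound of the form appearing in the definition of $\dist_{R/3}$, and taking the infimum over the rotation group $G$ gives $\dist_{R/3}(x,y)\lesssim e^{-R/3}$. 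Writing $t:=R/3$, this reads: there is a constant $A'>0$ such that, for $t$ large,
$$\dist_X(x,y)\leq e^{-3Ct}\ \Longrightarrow\ \dist_t(x,y)\leq A'e^{-t}.$$

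It remains to promote this conditional bound to the global inequality required by Proposition \ref{prop_abstract_finite_entropy}, which I would do by splitting into two cases. If $\dist_X(x,y)\leq e^{-3Ct}$, the implication above gives $\dist_t(x,y)\leq A'e^{-t}$. If instead $\dist_X(x,y)>e^{-3Ct}$, then since $\dist_t(x,y)\leq\diam(X)=:D$ always holds, we get $\dist_t(x,y)\leq D< De^{3Ct}\dist_X(x,y)$. In either case
$$\dist_t(x,y)\leq e^{3Ct+\log D}\,\dist_X(x,y)+A'e^{-t}.$$
This is precisely the hypothesis of Proposition \ref{prop_abstract_finite_entropy} with $c=3C=30\kappa d^{-1}A$, additive constant $\log D$, and $\varphi(t)=A'e^{-t}\to 0$ (only the behavior of $\varphi$ for large $t$ matters, which is exactly the regime where the geometric estimate applies, so the small-$t$ values may be absorbed into a bounded adjustment of $\varphi$). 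Proposition \ref{prop_abstract_finite_entropy} then yields $h(\Lc)\leq mc<\infty$, completing the proof.

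The substance of the argument lies entirely in the comparison of covering maps via the Beltrami equation, already carried out in Proposition \ref{prop_quasi_close}; the present proof merely repackages those estimates into the abstract finiteness criterion. The only points requiring care are matching the rotation convention in the definition of $\dist_t$ with the angle $\theta$ furnished by Proposition \ref{prop_quasi_close}, handled by the rotation invariance of $\D_{R/3}$ and the infimum over $G$, and covering the regime where $x$ and $y$ are far apart, where the uniform bound $\dist_t\leq\diam(X)$ from compactness suffices. I do not expect any genuine obstacle beyond this bookkeeping of constants.
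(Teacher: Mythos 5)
Your proposal is correct and rests on exactly the same geometric input as the paper: the implication that $\dist_X(x,y)\leq e^{-10\kappa d^{-1}AR}$ forces $x,y$ to be conformally $(R,\delta)$-close, hence $(R/3,\epsilon)$-close via Proposition \ref{prop_quasi_close}. The paper's primary write-up converts this directly into a count of $(R/3,\epsilon)$-separated points, but its closing remark explicitly notes the alternative of feeding the resulting inequality $\dist_{R/3}\lesssim e^{10\kappa d^{-1}AR}\dist_X+\varphi(R/3)$ into Proposition \ref{prop_abstract_finite_entropy}, which is precisely your packaging (including the case split using $\dist_t\leq\diam(X)$), and it yields the same bound $30\kappa d^{-1}AN$.
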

\proof
We will use the notations as in the proof of Theorem \ref{th_poincare_holder}. Consider $R$ large enough such that $e^{-R/3}\ll \epsilon$. 
We have seen in the proof of Theorem \ref{th_poincare_holder} that if $\dist_X(x,y)\leq e^{-10\kappa d^{-1}AR}$ then $x$ and $y$ are $(R/3,\epsilon)$-close. 
So, the maximal number of mutually $(R/3,\epsilon)$-separated points is smaller than a constant times $e^{10\kappa d^{-1}ANR}$ if the lamination is embedded in $\R^N$. So, the entropy $h(\Lc)$ is at most equal to 
$30\kappa d^{-1}AN$. 

Note that we can also apply here Proposition \ref{prop_abstract_finite_entropy} with $\varphi=e^{-t}$.  Indeed, using the arguments as above, we can show that $\dist_{R/3}\lesssim e^{10\kappa d^{-1} AR}\dist_X+\varphi(R/3)$. 
\endproof

Assume now that $X$ is a Riemannian manifold of dimension $k$ and $\Lc$ is transversally smooth. 
We can introduce various functionals  in order to describe  the dynamics.
For example we can introduce  dimensional entropies for a foliation   as done in Buzzi \cite{Buzzi} for maps.
For an interger $1\leq l\leq k$, consider the family $\Dc_l$ of manifolds
of dimension $l$ in $X$  which are smooth up to the boundary. Define 
$$h_l(\Lc):=\sup  \big \{ h(D):\  D\in \Dc_l\big\},$$
where $h(D)$ is the entropy restricted to the set $D$ as in the abstract setting.
Clearly, this sequence of entropies is increasing with $l$. 

We can also define 
$$\widetilde\chi_l(x):=\sup_{\epsilon>0}\sup_D\limsup_{R\to\infty}-{1\over R}\log\volume_l(D\cap B_R(x,\epsilon)).$$ 
Here, the supremum is taken over $D\in \Dc_l$ with $x\in D$. The 
Bowen ball  $B_R(x,\epsilon)$ is associated to the lamination and is defined as in the abstract setting. 
The volume $\volume_l$ denotes the Hausdorff measure of dimension $l$. 
The function $\widetilde\chi_l$ 
is the analog of the sum of $l$ largest Lyapounov exponents for dynamics of maps on manifolds. 
It measures how quickly the leaves get apart. 
We can consider this function relatively to a harmonic measure and show that it is constant when the measure is extremal.
The definitions of $h_l$ and $\widetilde \chi_l$ can be extended to the case of Riemannian foliations.

\begin{remark}\rm 
Assume that the lamination admits non-hyperbolic leaves and their union $Y$ is a closed subset. We can consider the entropy outside $Y$, but this quantity can be infinite. We can in this case modify the distance outside $Y$, e.g.  consider
$$\dist_X'(x,y)= \min\Big\{\dist_X(x,y),\inf_{x',y'\in Y}\dist_X(x,x')+\dist_X(y,y')\Big\}.$$
This means that we travel in $Y$ with zero cost.
The notion is natural because the Poincar\'e pseudo-distance vanishes on non-hyperbolic Riemann surfaces. 

For foliations on $\P^k$, we can also consider their pull-back using generically finite holomorphic maps from a projective manifold to $\P^k$ in order to get hyperbolic foliations.
\end{remark}

\section{Entropy of harmonic  measures}\label{section_entropy_metric}

We are going to discuss a notion of entropy for harmonic measures associated to laminations.
We first consider the abstract setting as in the beginning of Section \ref{section_entropy} for a family of distances 
$\{\dist_t\}_{t\geq 0}$ on a metric space $(X,\dist_X)$. Let $m$ be a probability measure on $X$. 
Fix positive constants $\epsilon,\delta$ and $t$.  Let $N_m(t,\epsilon,\delta)$ be the minimal
number of balls of radius $\epsilon$ relative to the metric
$\dist_t$ whose union has at least $m$-measure $1-\delta.$ 
{\it The entropy} of $m$ is    defined by the following formula
$$
h_\Dc(m):=\lim_{\delta\to 0}\lim_{\epsilon\to 0}\limsup_{t\to\infty}{1\over t} \log N_m(t,\epsilon,\delta). 
$$

We have the following general property.

\begin{proposition}
Let $(X,\dist_X)$ and $\Dc$ be  as  above. Then for any probability measure $m$  on $X$, we have 
$$h_\Dc(m)\leq  h_\Dc(\supp(m)).$$
\end{proposition}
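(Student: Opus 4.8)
The plan is to reduce the statement to the elementary observation that a cover of $\supp(m)$ by $\dist_t$-balls is automatically a competitor in the minimization defining $N_m(t,\epsilon,\delta)$. The one ingredient I need is that the support carries full mass, $m(\supp(m))=1$. I would recall this as follows: $X\setminus\supp(m)$ is open, and by definition each of its points has an open neighborhood of $m$-measure zero; since the metric space $X$ is separable (which holds in all the situations considered here, e.g. compact laminations), this open set is a countable union of $m$-null sets, hence itself $m$-null, so $m(\supp(m))=1$.

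The comparison is then immediate. Fix $t,\epsilon,\delta>0$ and choose a family of $N(\supp(m),t,\epsilon)$ balls of $\dist_t$-radius $\epsilon$ that covers $\supp(m)$ and realizes this minimum. The union of these balls contains $\supp(m)$, so it has $m$-measure $1\geq 1-\delta$; hence it is an admissible family in the definition of $N_m(t,\epsilon,\delta)$, and therefore
$$N_m(t,\epsilon,\delta)\leq N(\supp(m),t,\epsilon)\qquad\text{for all }t,\epsilon,\delta>0.$$
Here I only use that ``ball of radius $\epsilon$ relative to $\dist_t$'' means the same thing in both definitions; no constraint on centers is needed.

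Finally I would pass to the limits in the prescribed order. Applying $\frac1t\log(\cdot)$ and $\limsup_{t\to\infty}$ to the displayed inequality, and using that by definition $\limsup_{t\to\infty}\frac1t\log N(\supp(m),t,\epsilon)\leq h_\Dc(\supp(m))$ for every $\epsilon>0$, the right-hand side is bounded by $h_\Dc(\supp(m))$ uniformly in $\epsilon$ and $\delta$. Consequently $\lim_{\epsilon\to0}\limsup_{t\to\infty}\frac1t\log N_m(t,\epsilon,\delta)\leq h_\Dc(\supp(m))$ for every $\delta$, and taking $\lim_{\delta\to0}$ on the left yields $h_\Dc(m)\leq h_\Dc(\supp(m))$. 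I do not expect any genuine obstacle: the whole content is the full-measure property of the support, and the rest is monotonicity of minimal covering numbers. The only point deserving a word of care is the separability needed for $m(\supp(m))=1$, which is harmless in the intended applications.
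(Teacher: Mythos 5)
Your proof is correct and takes essentially the same route as the paper's: the key point in both is that a cover of $\supp(m)$ by $\dist_t$-balls of radius $\epsilon$ automatically has full $m$-measure, hence is admissible for $N_m(t,\epsilon,\delta)$, giving $N_m(t,\epsilon,\delta)\leq N(\supp(m),t,\epsilon)$ before passing to the limits. The only cosmetic difference is that the paper obtains its cover from a maximal $(t,\epsilon)$-separated family in $\supp(m)$ and then invokes the comparison between $M$ and $N$, whereas you use the minimal covering number directly; your explicit remark that $m(\supp(m))=1$ requires separability is a detail the paper leaves implicit.
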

\begin{proof}
Define $Y:=\supp(m)$. 
Choose    a  maximal family of $(t,\epsilon)$-separated points $x_i$ in $Y$. The family of $B_t(x_i,\epsilon)$ covers
$Y$. So, for every $\delta>0$ 
$$
N_m(t,\epsilon,\delta)\leq  M(Y,t,\epsilon).
$$ 
It follows from Proposition \ref{prop_comparison} that $h_\Dc(m)\leq  h_\Dc(Y).$
\end{proof}

As in Brin-Katok's theorem \cite{BrinKatok}, we can introduce the {\it local entropies} of $m$ at 
$x\in X$ by
$$h_\Dc^+(m,x,\epsilon):=\limsup_{t\rightarrow \infty} -{1\over t}\log m(B_t(x,\epsilon)),\qquad
h_\Dc^+(m,x):=\sup_{\epsilon> 0} h_\Dc^+(m,x,\epsilon), $$
and
$$h_\Dc^-(m,x,\epsilon):=\liminf_{t\rightarrow \infty} -{1\over t}\log m(B_t(x,\epsilon)),
\qquad
h_\Dc^-(m,x):=\sup_{\epsilon> 0} h_\Dc^-(m,x,\epsilon),
$$
where $B_t(x,\epsilon)$ denotes the ball centered at  $x$ of radius $\epsilon$ with respect to the distance $\dist_t$.

Note that
in the case of ergodic invariant measure associated with a continuous map on a metric compact space, the above notions of entropies coincide with the classical entropy of $m$, see Brin-Katok \cite{BrinKatok}.

Let $(X,\Lc)$ be  a Riemann  surface lamination such that its leaves are 
hyperbolic.
Since we do not assume that $X$ is compact, the discussion below can be applied to the regular part of a singular lamination.

The Poincar\'e metric $\omega_P$  provides a Laplacian $\Delta_P$ along the leaves. Recall that  a probability measure
$m$ is  {\it harmonic}  if it is  orthogonal  to  continuous functions  $\phi$ which  can be written $\phi=\Delta_P\psi$ where $\psi$ is a continuous function, smooth along the leaves and having compact support in $X$. In a flow box $\U=\B\times \T$ with $\B$ open set in $\C$, we can write
$$m=\int m_s d\mu(s),$$
where $\mu$ is a positive measure on $\T$, $m_s=h_s\omega_P$ is a measure on 
$\B\times\{s\}$ and $h_s$ is a positive harmonic function on this plaque. We refer to \cite{DinhNguyenSibony2, FornaessSibony1, Garnett} for more details and the relation with the notion of $\ddbar$-closed current.

Recall that in Section \ref{section_entropy}  we have associated to $(X,\Lc)$ a family of distances $\{\dist_t\}_{t\geq 0}$. Therefore, we can associate to $m$ a metric entropy and local entropies defined as above in the abstract setting. Recall that a harmonic probability measure $m$ is {\it extremal} if all harmonic probability measures $m_1,m_2$ satisfying $m_1+m_2=2m$ are equal to $m$. We have the following result.

\begin{theorem} \label{th_local_entropy}
Let $(X,\Lc)$ be a compact smooth  lamination by hyperbolic Riemann surfaces.
Let $m$ be  a  harmonic probability measure.  Then, the local entropies
$h^\pm$ of $m$ are constant on leaves.  
In particular, if $m$ is  extremal,  then  $h^\pm$  are constant $m$-almost everywhere.
\end{theorem}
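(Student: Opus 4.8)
The strategy is to prove the stronger local statement that $h^+(m,\cdot)$ and $h^-(m,\cdot)$ are constant along each leaf, and then to deduce the $m$-almost everywhere statement from the ergodicity of extremal harmonic measures. Fix two points $x$ and $x'$ on a common leaf $L$, pick a preimage $a:=\phi_x^{-1}(x')\in\D$, set $\rho:=\dist_P(0,a)$, and choose an automorphism $\gamma_a$ of $\D$ with $\gamma_a(0)=a$ and $\phi_{x'}=\phi_x\circ\gamma_a$. Since $\gamma_a$ is a Poincar\'e isometry, $\gamma_a(\D_t)$ is the Poincar\'e ball of radius $t$ centred at $a$, so that $\D_{t-\rho}\subseteq\gamma_a(\D_t)$. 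Throughout I take $\epsilon<\epsilon_0$, so that by evaluating at $\xi=0$ in the definition of $\dist_t$ every Bowen ball $B_t(x,\epsilon)$ is contained in a single flow box around $x$.

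First I would establish a \emph{recentering lemma}. Let $y\in B_t(x',\epsilon)$, so that there is a rotation $\theta'$ with $\dist_X(\phi_x(\gamma_a(e^{i\theta'}\zeta)),\phi_y(\zeta))\le\epsilon$ for all $\zeta\in\D_t$. Substituting $\eta:=\gamma_a(e^{i\theta'}\zeta)$ and setting $\beta:=e^{-i\theta'}\gamma_a^{-1}$, this reads $\dist_X(\phi_x(\eta),\psi(\eta))\le\epsilon$ for all $\eta\in\gamma_a(\D_t)$, where $\psi:=\phi_y\circ\beta$ is a universal covering map of $L_y$ based at $y^*:=\psi(0)$. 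Restricting to $\eta\in\D_{t-\rho}\subseteq\gamma_a(\D_t)$ gives $\dist_{t-\rho}(x,y^*)\le\epsilon$, while evaluating at $\eta=0$ gives $\dist_X(x,y^*)\le\epsilon$. The point $y^*$ lies on $L_y$ at Poincar\'e distance $\rho$ from $y$ and is exactly the image of $y$ under the leaf holonomy $H$ transporting the transversal at $x'$ to the one at $x$ along the fixed path from $x'$ to $x$. Hence $H(B_t(x',\epsilon))\subseteq B_{t-\rho}(x,\epsilon)$, and exchanging the roles of $x$ and $x'$ gives the reverse inclusion with $H^{-1}$.

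Next I would compare the measures under $H$. Since $y\mapsto y^*$ moves points only a bounded Poincar\'e distance $\rho$, independent of $t$, the holonomy $H$ is defined on a fixed neighbourhood, preserves leaves, and maps plaques to plaques. Writing $m$ locally as $\int h_s\,\omega_P\,d\mu(s)$ with $h_s$ positive and harmonic along plaques, the push-forward $H_*m$ is mutually absolutely continuous with $m$ with Radon--Nikodym density bounded above and below by a constant $C=C(x,x')$ independent of $t$: the transverse cocycle is controlled by the ratios of the harmonic densities at the two endpoints, which are Harnack-comparable because the holonomy path has bounded length, while the leafwise part changes by a bounded Jacobian since $H$ moves points a bounded Poincar\'e distance inside each leaf; the transverse regularity of the Poincar\'e metric (Theorem~\ref{th_poincare_holder}) guarantees that all these bounds are uniform. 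Combining with the recentering lemma,
$$m\big(B_t(x',\epsilon)\big)\le C\, m\big(H(B_t(x',\epsilon))\big)\le C\, m\big(B_{t-\rho}(x,\epsilon)\big).$$
Taking $-\tfrac1t\log(\cdot)$ and then $\limsup_{t\to\infty}$, the term $\tfrac1t\log C\to0$ and the shift $t-\rho$ only rescales time by a ratio tending to $1$, so $h^+(m,x',\epsilon)\ge h^+(m,x,\epsilon)$; the symmetric inclusion gives the reverse, and taking the supremum over $\epsilon$ yields $h^+(m,x)=h^+(m,x')$. The same argument with $\liminf$ gives the identity for $h^-$. The main obstacle is precisely this measure-comparison step: one must make rigorous, with a bound uniform in $t$, how the harmonic measure is distorted by a bounded holonomy, which is where harmonicity (via Harnack) and the transverse regularity of the Poincar\'e metric enter.

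Finally, for the extremal case, the functions $h^\pm(m,\cdot)$ are measurable and constant on leaves, so each of their level sets is a measurable leafwise-saturated (i.e.\ union of leaves) subset of $X$. An extremal harmonic measure is ergodic in the sense that every such saturated measurable set has $m$-measure $0$ or $1$ (the standard ergodicity of extremal harmonic measures, see Garnett~\cite{Garnett}). Consequently $h^\pm(m,\cdot)$ are $m$-almost everywhere constant, which completes the proof.
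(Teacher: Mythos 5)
Your overall architecture --- recentre the covering maps by an automorphism of $\D$ to obtain a time-shifted relation between Bowen balls at two points of a leaf, compare measures, then invoke ergodicity of extremal harmonic measures on saturated sets --- is the same as the paper's, and your recentering computation (the inclusion $\D_{t-\rho}\subseteq\gamma_a(\D_t)$ producing, for each $y\in B_t(x',\epsilon)$, a point $y^*\in B_{t-\rho}(x,\epsilon)$ on $L_y$) is exactly the paper's final step. The gap is the measure-comparison step, which you yourself flag as ``the main obstacle'' but do not close, and the mechanism you propose for it does not work as stated. The point $y^*=\phi_y\bigl(e^{-i\theta'}\gamma_a^{-1}(0)\bigr)$ depends on $y$ through the witnessing rotation $\theta'$, so $y\mapsto y^*$ is \emph{not} a fixed holonomy diffeomorphism $H$; and for the genuine holonomy $H$ along a path from $x'$ to $x$ the inclusion $H(B_t(x',\epsilon))\subseteq B_{t-\rho}(x,\epsilon)$ fails in general: the slice of $B_{t-\rho}(x,\epsilon)$ in the plaque of $L_y$ has diameter $O(e^{-t})$, while $H(y)$ and $y^*$ differ by an amount that is only $O(\epsilon)$. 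So the chain $m(B_t(x',\epsilon))\le C\,m(H(B_t(x',\epsilon)))\le C\,m(B_{t-\rho}(x,\epsilon))$ is not justified.

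The paper closes this by decoupling the transverse and leafwise directions. Working in a single flow box containing both $x$ and $x'$ and writing $m=\int h_s\,\omega_P\,d\mu(s)$ with the normalization $h_s(0)=1$, it introduces transverse local entropies $\widetilde h^\pm$ defined using only $\mu$ and the projections $\widetilde B_t(\cdot,\epsilon)$ of Bowen balls to the transversal. The recentering argument yields precisely the transverse inclusion $\widetilde B_t(x',\epsilon)\subseteq\widetilde B_{t-\gamma}(x,\epsilon)$ (your $y^*$ lies in the same plaque as $y$, so only its transverse coordinate matters), and since the \emph{same} measure $\mu$ is used for both points no holonomy-distortion estimate is needed: one gets $\widetilde h^\pm(x')\ge\widetilde h^\pm(x)$ immediately. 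The leafwise contribution is handled separately by Lemma \ref{lemma_bowen_slice} (which rests on Lemma \ref{lemma_poincare_separated}): the slice of $B_t(\cdot,\epsilon)$ in any plaque is squeezed between discs of radii $A^{-1}e^{-t}$ and $Ae^{-t}$, and together with Harnack's inequality for the densities $h_s$ this gives $h^\pm=\widetilde h^\pm+2$ (Lemma \ref{lemma_slice_entropy}). This two-sided slice estimate is the quantitative input that your appeal to a ``bounded leafwise Jacobian'' would have to be replaced by; without it, the comparison of $m(B_t(x',\epsilon))$ with $m(B_{t-\rho}(x,\epsilon))$ is not established. (A minor point: the paper needs only $A^{-1}\dist_X\le\dist_P\le A\dist_X$ on plaques here, not the full H\"older continuity of Theorem \ref{th_poincare_holder}.) Your recentering lemma and the concluding ergodicity argument are correct and match the paper.
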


In fact, the result holds for compact laminations which are not smooth, provided that 
$A^{-1}\dist_X\leq\dist_P\leq A\dist_X$ with the same constant $A>0$ for all plaques of a suitable atlas. We have seen that these inequalities hold when the lamination is smooth. 

Fix a covering of $X$ by a finite number of flow boxes
$\U=\B\times\T$, where $\B$ is the disc of center 0 and of radius 2 in $\C$ and $\T$ is a 
ball of center $s_0$ and of radius 2 in a complete metric space. 
We assume that the boxes $\U'=\D\times\T$ cover $X$. 
For simplicity, in what follows, we identify
the distance $\dist_X$ on $\U$ with the one induced by the distance on $\T$ and the Euclidean distance on $\B$. Denote by $\T_r$ the ball of center $s_0$ and of radius $r$ in $\T$.  
Fix also a constant $\delta>0$ such that if $\phi$ is a covering map of a leaf, then the image by $\phi$ of
any subset of Poincar\'e diameter $2\delta$  is contained in a flow box. 
  
The following lemma gives us a description of the intersection of Bowen balls with plaques. 

\begin{lemma} \label{lemma_bowen_slice}
Let $\epsilon>0$ be a fixed constant small enough. Then, 
there is a constant $A>0$ satisfying the following properties. Let $y$ and $y'$ be two points in $\D\times \{s\}$ with $s\in\T_1$ and $R>0$ be a constant. If $\dist_X(y,y')\leq A^{-1}e^{-R}$ then $y$ and $y'$ are $(R,\epsilon)$-close. If 
$\dist_X(y,y')\geq Ae^{-R}$ then $y$ and $y'$ are $(R,\epsilon)$-separated.
\end{lemma}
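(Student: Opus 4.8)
The plan is to transport the model computation of Lemma \ref{lemma_poincare_separated} from $\D$ to the leaf through $y$ via the covering map $\phi_y$. Since $y$ and $y'$ lie in the same plaque $\D\times\{s\}$, they lie on the same leaf $L=L_y$; hence, choosing the covering map $\phi_{y'}=\phi_y\circ\tau_a$ for a suitable automorphism $\tau_a$ of $\D$ with $\tau_a(0)=a$ and $\phi_y(a)=y'$ (the Bowen distances do not depend on this choice), where $a$ is the preimage of $y'$ in the component of $\phi_y^{-1}(P)$ containing $0$ and $P$ is the plaque of $\U$ through $y$. Because the atlas may be taken so that $A_0^{-1}\dist_X\le\dist_P\le A_0\dist_X$ on plaques (as noted after Theorem \ref{th_local_entropy}, this holds in the smooth case) and the short geodesic between the nearby points $y,y'$ stays in $P$, the map $\phi_y$ is an isometry from this component onto $P$, so $\dist_P(0,a)=\dist_P(y,y')\simeq\dist_X(y,y')$. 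Throughout I would write $\dist_P$ for the Poincar\'e distance on $\D$ and on $L$, and use that $\phi_y$ is a local isometry; the condition $s\in\T_1$ keeps $P$ well inside its flow box, so there is room to invoke the property defining $\delta$, namely that the $\phi_y$-image of any set of Poincar\'e diameter $\le 2\delta$ lies in one flow box.

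First I would fix an auxiliary constant $\rho_0$ with $0<\rho_0\le\delta$ and $A_0\epsilon<\rho_0$ (possible once $\epsilon$ is small), apply Lemma \ref{lemma_poincare_separated} with its separation parameter set equal to $\rho_0$, and take for $A$ the resulting constant enlarged by the factor $A_0$, so that the hypotheses on $\dist_P(0,a)$ follow from those on $\dist_X(y,y')$.

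For the first assertion, suppose $\dist_X(y,y')\le A^{-1}e^{-R}$, so that $\dist_P(0,a)\le A_0A^{-1}e^{-R}$ is below the threshold in Lemma \ref{lemma_poincare_separated}(i). That lemma provides automorphisms $\tau_0,\tau_a'$ with $\tau_0(0)=0$ (a rotation $\xi\mapsto e^{i\theta}\xi$) and $\tau_a'(0)=a$ such that $\dist_P(e^{i\theta}\xi,\tau_a'(\xi))\le\rho_0\le\delta$ for all $\xi\in\D_R$. Replacing the covering map by $\phi_y\circ\tau_a'$ and taking this $\theta$ in the infimum defining $\dist_R$, the local isometry property and $\dist_P\le\delta$ give $\dist_X(\phi_y(e^{i\theta}\xi),\phi_y(\tau_a'\xi))\le\dist_P(e^{i\theta}\xi,\tau_a'\xi)\le\rho_0$ on $\D_R$, whence $\dist_R(y,y')\le\rho_0\le\epsilon$, i.e. $y$ and $y'$ are $(R,\epsilon)$-close.

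For the second assertion I may assume $\dist_X(y,y')\le\epsilon$, since otherwise $\dist_R(y,y')\ge\dist_X(y,y')>\epsilon$ already, the family $\{\dist_t\}$ being increasing. Then $A_0^{-1}Ae^{-R}\le\dist_P(0,a)\le A_0\epsilon\le 1$, so Lemma \ref{lemma_poincare_separated}(ii) applies. Assuming for contradiction that $y,y'$ are $(R,\epsilon)$-close, there is a rotation $\theta$ with $\dist_X(\phi_y(e^{i\theta}\xi),\phi_y(\tau_a\xi))\le\epsilon$ for every $\xi\in\D_R$. The hard part is that $\phi_y$ need not be injective (leaves may carry short closed geodesics, so the injectivity radius is not bounded below), so a pointwise separation upstairs in $\D$ need not survive downstairs. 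I would overcome this by a monodromy argument: for each $\xi$ the two $\epsilon$-close images lie in one plaque, on which a local inverse $\psi_\xi$ of $\phi_y$ with $\psi_\xi(\phi_y(e^{i\theta}\xi))=e^{i\theta}\xi$ is defined, and I set $G(\xi):=\psi_\xi(\phi_y(\tau_a\xi))$. Then $G(\xi)$ and $\tau_a\xi$ are both $\phi_y$-preimages of $\phi_y(\tau_a\xi)$, hence differ by a deck transformation $\gamma_\xi$; since $\xi\mapsto G(\xi)$ is continuous and the deck group is discrete, $\gamma_\xi$ is constant on the connected set $\D_R$, and evaluation at $\xi=0$ (where $G(0)=a=\tau_a(0)$) gives $\gamma_\xi=\id$, so $G(\xi)=\tau_a\xi$. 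As $\psi_\xi$ is a Poincar\'e isometry on the plaque, the plaque comparison yields $\dist_P(e^{i\theta}\xi,\tau_a\xi)=\dist_P(e^{i\theta}\xi,G(\xi))\le A_0\dist_X(\phi_y(e^{i\theta}\xi),\phi_y(\tau_a\xi))\le A_0\epsilon$ for all $\xi\in\D_R$. This contradicts Lemma \ref{lemma_poincare_separated}(ii), which produces some $\xi_0\in\D_R$ with $\dist_P(e^{i\theta}\xi_0,\tau_a\xi_0)>\rho_0>A_0\epsilon$. Hence $y$ and $y'$ are $(R,\epsilon)$-separated, completing the proof.
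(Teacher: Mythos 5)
Your treatment of the first assertion is correct and is essentially the paper's argument: lift to the disc, apply Lemma \ref{lemma_poincare_separated}(i), and push back down using the comparability of $\dist_X$ and $\dist_P$ on plaques together with the defining property of $\delta$.

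The second assertion, however, has a genuine gap at the step where you define $G$. You assert that ``for each $\xi$ the two $\epsilon$-close images lie in one plaque,'' but ambient $\epsilon$-proximity of two points of the \emph{same} leaf does not put them on the same plaque: a leaf typically meets a flow box in many (often densely many) plaques, so $\phi_y(e^{i\theta}\xi)$ and $\phi_y(\tau_a\xi)$ can be $\epsilon$-close in $\dist_X$ while being far apart in the leafwise Poincar\'e metric. This is precisely the phenomenon the lemma is meant to control, so assuming it away begs the question; and without it the local inverse $\psi_\xi$ need not be defined at $\phi_y(\tau_a\xi)$, so $G(\xi)$ is not defined and the monodromy argument cannot start. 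The gap is repairable: let $E$ be the set of $\xi\in\overline\D_{R}$ at which the \emph{leafwise} Poincar\'e distance of the two images is $\leq\delta$; then $0\in E$, $E$ is closed, and for $\xi\in E$ the connecting leafwise geodesic lies in a single plaque, forcing that distance to be $\leq A_0\epsilon<\delta$, which makes $E$ open as well; connectedness gives $E=\overline\D_R$, after which your lifting and deck-transformation argument goes through and contradicts Lemma \ref{lemma_poincare_separated}(ii). The paper sidesteps this entirely by a different device: it applies Lemma \ref{lemma_poincare_separated}(ii) with a larger separation parameter and uses continuity of $\xi\mapsto\dist_P(\xi,\tau(\xi))$ to produce a single point $z$ with $\epsilon\ll\dist_P(z,\tau(z))<\delta$; the upper bound $<\delta$ is what guarantees that $\phi(z)$ and $\phi'(z)$ lie in one plaque, where $\dist_X$ and $\dist_P$ are comparable and the separation survives. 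Either route works, but as written your proof is missing the step that justifies leafwise closeness on all of $\D_R$.
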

\proof 
Fix a constant $A>0$ large enough depending on $\epsilon$. We prove the first assertion. Assume that  $\dist_X(y,y')\leq A^{-1}e^{-R}$.
Let $L$ denote the leaf containing $\B\times\{s\}$. 
Let $\phi'$ be a covering map of $L$ such that $\phi'(0)=y'$. So, there is a point $a\in \D$ such that $\phi'(a)=y$ and $\dist_P(0,a)\ll e^{-R}$. By Lemma \ref{lemma_poincare_separated}, there is an automorphism $\tau$ of $\D$, close to the identity on $\D_R$, such that $\tau(0)=a$. Define $\phi:=\phi'\circ\tau$. This is also a covering map of $L$. It is clear that $\dist_{\D_R}(\phi,\phi')\leq \epsilon$. Therefore, $y$ and $y'$ are $(R,\epsilon)$-close. 

For the second assertion, assume that $\dist_X(y,y')\geq Ae^{-R}$ but $y$ and $y'$ are $(R,\epsilon)$-close.
By definition of Bowen ball, we can find two covering maps $\phi,\phi':\D\to L$ such that $\phi(0)=y$, $\phi'(0)=y'$ and $\dist_{\D_R}(\phi,\phi')\leq\epsilon$. In particular, we have $\dist_X(y,y')\leq\epsilon$. 
We can find a point $a\in\D$ such that $\phi'(a)=y$ and  $\dist_P(0,a)=\dist_P(y,y')$. Since $\epsilon$ is small and $A$ is large, we have 
$$e^{-R}\ll \dist_X(y,y')\lesssim \dist_P(0,a)\leq \delta.$$ 

There is also an automorphism $\tau$ of $\D$ such that $\tau(0)=a$ and $\phi=\phi'\circ\tau$. 
The last assertion in Lemma \ref{lemma_poincare_separated} implies by continuity that we can find a point $z\in \D_R$ satisfying
$\epsilon\ll \dist_P(z,\tau(z))<\delta$. Finally, the property of $\delta$ implies that $\dist_X(\phi(z),\phi'(z))>\epsilon$. This is a contradiction.
\endproof

We now introduce a notion of transversal entropy which can be extended to a general lamination. In what follows, if $V$ is a subset of $\U$, we denote by $\widetilde V$ its projection on $\T$. The measure $m$ can be written in a unique way on $\U$ as
$$m=\int m_s d\mu(s),$$
where $m_s=h_s\omega_P$ is as above with the extra condition $h_s(0)=1$. 

By Harnack's principle, the family of positive harmonic functions $h_s$ is locally uniformly bounded from above and from below  by strictly positive constants. This implies that the following notions of transversal entropy do not depend on the choice of flow box. Define 
$$\widetilde h ^+(x):=\sup_{\epsilon>0} \limsup_{R\to\infty} -{1\over R}\log \mu(\widetilde B_R(x,\epsilon))$$
and
$$\widetilde h ^-(x):=\sup_{\epsilon>0} \liminf_{R\to\infty} -{1\over R}\log \mu(\widetilde B_R(x,\epsilon)).$$

Note that we can also use $\widetilde B_R(x,\epsilon)$ in order to define a notion of topological entropy on $\T$. 

\begin{lemma} \label{lemma_slice_entropy}
We have $h^\pm=\widetilde h ^\pm+2$. 
\end{lemma}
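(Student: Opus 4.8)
The plan is to relate the Bowen balls $B_R(x,\epsilon)$ on $X$ to products of a transversal ball $\widetilde B_R(x,\epsilon)$ on $\T$ with a plaque-sized slice in the leaf direction, and then to compute how the harmonic measure $m$ distributes across these two factors. The key geometric input is Lemma \ref{lemma_bowen_slice}, which says that on a single plaque $\D\times\{s\}$, being $(R,\epsilon)$-close is essentially the same as being at Euclidean distance $\asymp e^{-R}$. So up to the fixed multiplicative constant $A$, the intersection $B_R(x,\epsilon)\cap(\D\times\{s\})$ is a disc of Euclidean radius comparable to $e^{-R}$ centered at the appropriate point of the plaque.

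First I would fix a flow box $\U=\B\times\T$ containing $x$ and write $m=\int m_s\,d\mu(s)$ with $m_s=h_s\omega_P$ and $h_s(0)=1$, as normalized above. The strategy is to estimate $m(B_R(x,\epsilon))$ by integrating the plaquewise masses $m_s(B_R(x,\epsilon)\cap(\D\times\{s\}))$ over those $s$ lying in the transversal projection of the Bowen ball, which by Lemma \ref{lemma_bowen_slice} is comparable to $\widetilde B_R(x,\epsilon)$ (with $\epsilon$ adjusted by a bounded factor). By Lemma \ref{lemma_bowen_slice} the slice in each such plaque is a disc of Euclidean radius $\asymp e^{-R}$; since $h_s$ is, by Harnack's principle, bounded above and below by positive constants uniformly in $s$, and the Poincar\'e form $\omega_P$ is comparable to Euclidean area on the fixed plaque, the plaque mass $m_s(\text{slice})$ is comparable to the Euclidean area of a disc of radius $e^{-R}$, i.e. $\asymp e^{-2R}$. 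Hence
$$
m(B_R(x,\epsilon))\asymp e^{-2R}\,\mu(\widetilde B_R(x,\epsilon)),
$$
up to multiplicative constants independent of $R$ (but depending on $\epsilon$, which is harmless after taking $\frac1R\log$).

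Taking $-\frac1R\log$ of both sides, the factor $e^{-2R}$ contributes exactly $+2$, while $-\frac1R\log\mu(\widetilde B_R(x,\epsilon))$ converges (after the $\sup_\epsilon$ and $\limsup$ or $\liminf$) to $\widetilde h^\pm(x)$. This yields $h^\pm(x)=\widetilde h^\pm(x)+2$ once I check that the comparison constant $A$ in Lemma \ref{lemma_bowen_slice} only shifts $\epsilon$ by a bounded factor, which disappears under $\sup_{\epsilon>0}$, and that the $\limsup$ (resp.\ $\liminf$) defining $h^+$ (resp.\ $h^-$) matches that defining $\widetilde h^+$ (resp.\ $\widetilde h^-$) since the $e^{-2R}$ factor is deterministic and so commutes with both $\limsup$ and $\liminf$.

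The main obstacle I anticipate is the careful bookkeeping of the two-sided containments: the Bowen ball on $X$ is defined through universal covering maps of whole leaves, not through a single plaque, so I must argue that its intersection with the fixed flow box really is captured, up to bounded distortion of $\epsilon$, by the plaquewise description of Lemma \ref{lemma_bowen_slice}, and that points of $B_R(x,\epsilon)$ in nearby plaques all project into a set comparable to $\widetilde B_R(x,\epsilon)$. This requires sandwiching $B_R(x,\epsilon)$ between two sets of the product form $\{(\text{Euclidean disc of radius}\asymp e^{-R})\}\times\widetilde B_R(x,c\epsilon)$ for suitable constants, using the first assertion of Lemma \ref{lemma_bowen_slice} for the lower bound on the mass and the second for the upper bound. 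Once these inclusions are established, the integration against $m=\int m_s\,d\mu(s)$ and the Harnack bounds on $h_s$ make the remaining estimate routine.
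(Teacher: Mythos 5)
Your proposal is correct and follows essentially the same route as the paper: both arguments sandwich the plaque-slices of $B_R(x,\epsilon)$ between Euclidean discs of radius $\asymp e^{-R}$ via the two assertions of Lemma \ref{lemma_bowen_slice} (with $\epsilon$ replaced by $\epsilon/2$ for the lower bound), use Harnack to control $h_s$, and integrate against $\mu$ to get $m(B_R(x,\epsilon))\asymp e^{-2R}\mu(\widetilde B_R(x,c\epsilon))$. The bookkeeping you flag at the end is exactly what the paper does, and the $\epsilon$-adjustment indeed disappears under $\sup_{\epsilon>0}$.
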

\proof
We can assume that $x$ belongs to $\D\times\T_1$. 
By Lemma \ref{lemma_bowen_slice}, the intersection of $B_R(x,\epsilon)$ with a plaque is of diameter at most equal to $2Ae^{-R}$. Since $h_s$ is bounded from above uniformly on $s$, we deduce that 
$$m(B_R(x,\epsilon))\lesssim e^{-2R}\mu(\widetilde B_R(x,\epsilon)).$$
It follows that $h^\pm\geq \widetilde h ^\pm+2$. 

We apply the first assertion in Lemma \ref{lemma_bowen_slice} to $\epsilon/2$ instead of $\epsilon$. We deduce that if a plaque $\D\times\{s\}$ intersects $B_R(x,\epsilon/2)$ then its intersection with $B_R(x,\epsilon)$ contains a disc of radius $A^{-1}e^{-R}$. It follows that 
$$m(B_R(x,\epsilon))\gtrsim e^{-2R}\mu(\widetilde B_R(x,\epsilon/2)).$$
This implies that $h^\pm\leq \widetilde h ^\pm+2$ and completes the proof of the lemma.
\endproof

\noindent
{\bf End of the proof of Theorem \ref{th_local_entropy}.} 
Let   $x$ and $y$  be in the same leaf $L$.  We want to prove that $h^\pm(x)=h^\pm(y)$. It is enough to consider the case where $x$ and $y$ are close enough and to show that $h^\pm(x)\leq h^\pm(y)$. So, using the same notation as above, we can assume that $x$ and $y$ belong to $\D\times\{s_0\}$. 
We show that $\widetilde h ^\pm(x)\leq \widetilde h ^\pm(y)$.
Fix a constant $\epsilon>0$ small enough and a constant $\gamma>0$ large enough.  It suffices to show for large $R$ that
$$\widetilde B_R(y,\epsilon)\subset \widetilde B_{R-\gamma}(x,\epsilon).$$

Let $y'$ be a point in the intersection of $B_R(y,\epsilon)$ with a plaque $\B\times \{s_0'\}$. We have to show that $B_{R-\gamma}(x,\epsilon)$ intersects also $\B\times \{s_0'\}$. Let $L$ and $L'$ denote the leaves containing 
$\B\times \{s_0\}$ and $\B\times \{s_0'\}$ respectively.  
Consider universal maps $\phi:\D\to L$ and $\phi':\D\to L'$ such that $\phi(0)=y$, $\phi'(0)=y'$ and $\dist_{\D_R}(\phi,\phi')\leq\epsilon$. Let $a\in\D$ such that $\phi(a)=x$. Since $x$ is close to $y$, we can find $a$ close to $0$. Let $\tau$ be an automorphism of $\D$ such that $\tau(0)=a$. Since $a$ is close to 0 and $R$ is large, the image of $\D_{R-\gamma}$ by $\tau$, i.e. the disc of center $a$ and of radius $R-\gamma$, is contained in $\D_R$. We deduce that 
$\dist_{\D_{R-\gamma}}(\phi\circ \tau,\phi'\circ\tau)\leq\epsilon$. In particular, $\phi'(\tau(a))$ is a point in $B_{R-\gamma}(x,\epsilon)$. This implies the result.
\hfill $\square$

\bigskip

Let $(X,\Lc)$ be as in Theorem \ref{th_local_entropy}.
Let $m$ be an extremal harmonic probability measure. For simplicity, we will denote by $h^\pm(m)$ the constants associated with the local entropy functions $h^\pm$. We have the following result.

\begin{proposition}
With the above notation, we have
$$ h^-(m)\leq  h(m)\leq  h^+(m)\leq  h(\Lc). $$  
In particular, $h(\Lc)$ is always larger or equal to $2$.
\end{proposition}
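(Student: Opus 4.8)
The plan is to establish the three inequalities separately and then read off the bound $h(\Lc)\geq 2$ from Lemma~\ref{lemma_slice_entropy}. The two inner inequalities $h^-(m)\leq h(m)\leq h^+(m)$ are the lamination analogue of the Brin--Katok theorem \cite{BrinKatok}, and I would prove them by the standard comparison between covering numbers and the decay rate of the mass of Bowen balls, being careful with the nested limits $t\to\infty$, $\epsilon\to 0$, $\delta\to 0$ in the definition of $h(m)$. Throughout I use that, since $m$ is extremal, Theorem~\ref{th_local_entropy} makes $h^\pm(m,\cdot)$ equal to the constants $h^\pm(m)$ for $m$-a.e.\ $x$, together with the monotonicity of $h^\pm(m,x,\epsilon)$ in $\epsilon$ and the fact that each $\dist_t$ is a genuine distance (so the triangle inequality is available).

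For $h(m)\leq h^+(m)$, fix $c>h^+(m)$. Since $h^+(m,x,\epsilon/2)\leq h^+(m)<c$ a.e., the sets $G_N:=\{x:\ m(B_t(x,\epsilon/2))\geq e^{-ct}\ \text{for all}\ t\geq N\}$ increase to a full-measure set, so $m(G_N)>1-\delta$ for $N$ large. For $t\geq N$ choose a maximal $(t,\epsilon)$-separated subset $\{x_1,\dots,x_M\}$ of $G_N$: the balls $B_t(x_i,\epsilon)$ cover $G_N$, while the balls $B_t(x_i,\epsilon/2)$ are pairwise disjoint and each has mass $\geq e^{-ct}$, so $M e^{-ct}\leq 1$ and $N_m(t,\epsilon,\delta)\leq e^{ct}$; letting $c\downarrow h^+(m)$ gives the claim. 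For $h^-(m)\leq h(m)$, fix $c<h^-(m)$ and $\delta>0$; using $\sup_\epsilon h^-(m,x,\epsilon)>c$ a.e.\ and monotonicity, pick $\epsilon_1$ and a set $W$ with $m(W)>1-\delta$ on which $h^-(m,x,\epsilon_1)>c$, and set $\epsilon_0=\epsilon_1/2$. Then $W_N:=\{x\in W:\ m(B_t(x,\epsilon_1))<e^{-ct}\ \forall t\geq N\}$ has $m(W_N)>1-\delta$ for large $N$. If balls $B_t(z_j,\epsilon_0)$ with $t\geq N$ cover a set of measure $\geq 1-\delta$, those meeting $W_N$ cover $W_N$ up to measure $2\delta$; each lies in $B_t(w_j,\epsilon_1)$ for some $w_j\in W_N$, hence has mass $<e^{-ct}$, so their number is $\geq(1-2\delta)e^{ct}$. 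Thus $N_m(t,\epsilon_0,\delta)\geq(1-2\delta)e^{ct}$, and letting $\epsilon\to 0$, $\delta\to 0$, $c\uparrow h^-(m)$ finishes.

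The inequality $h^+(m)\leq h(\Lc)$ is the step I expect to be the crux, since it does \emph{not} follow from the earlier proposition bounding $h_\Dc(m)$ by $h_\Dc(\supp(m))$ (that controls $h(m)$, not $h^+(m)$) and instead needs a Borel--Cantelli argument. Fix $\epsilon$ and $c>h(\Lc)$, so $N(X,t,\epsilon/2)\leq e^{ct}$ for $t$ large. Cover $X$ by this many $\dist_t$-balls of radius $\epsilon/2$ and call a ball \emph{light} if its mass is $<e^{-c't}$ with $c'>c$. The union $S_t$ of light balls satisfies $m(S_t)\leq e^{ct}e^{-c't}=e^{-(c'-c)t}$, which is summable over $t\in\N$, so by Borel--Cantelli $m$-a.e.\ $x$ lies outside $S_t$ for all large integers $t$. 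For such $x$, the covering ball $B_t(z_i,\epsilon/2)\ni x$ is not light and $B_t(z_i,\epsilon/2)\subset B_t(x,\epsilon)$, whence $m(B_t(x,\epsilon))\geq e^{-c't}$; monotonicity of $\dist_t$ in $t$ upgrades this from integers to all $t$. Hence $h^+(m,x,\epsilon)\leq c'$ a.e., and taking the supremum over $\epsilon=1/k$ and then $c'\downarrow c\downarrow h(\Lc)$ yields $h^+(m)\leq h(\Lc)$.

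Finally, Lemma~\ref{lemma_slice_entropy} gives $h^\pm=\widetilde h^\pm+2$, and $\widetilde h^\pm\geq 0$ because the transverse measure $\mu$ is finite on a flow box while $\mu(\widetilde B_R(x,\epsilon))$ is non-increasing in $R$, so $-\tfrac1R\log\mu(\widetilde B_R(x,\epsilon))\geq -\tfrac1R\log\const\to 0$. Thus $h^+(m)\geq 2$, and chaining the inequalities gives $h(\Lc)\geq h^+(m)\geq 2$; since a compact lamination by hyperbolic Riemann surfaces always carries an extremal harmonic probability measure, the bound $h(\Lc)\geq 2$ holds unconditionally. The genuinely delicate points are the measurability of $x\mapsto m(B_t(x,\epsilon))$ and of the threshold $\epsilon(x)$, and the bookkeeping of the three nested limits; the substantively new ingredient, absent from the classical Brin--Katok picture, is the last inequality, where the mass decay of Bowen balls is compared directly to the topological covering numbers $N(X,t,\epsilon)$.
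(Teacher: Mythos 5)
Your proof is correct, and for the two Brin--Katok-type inequalities $h^-(m)\leq h(m)\leq h^+(m)$ it follows essentially the same route as the paper: a maximal separated (equivalently, maximal disjoint-ball) family for the upper bound, and a lower bound obtained by enlarging the radius of a near-minimal cover so that the enlarged balls are centred on the good set where the mass decays. One small difference there: the paper passes through Vitali's covering lemma to extract a disjoint subfamily of tripled balls, whereas you simply observe that every covering ball meeting $W_N$ sits inside a ball of doubled radius centred in $W_N$; your bookkeeping is a bit cleaner and avoids Vitali entirely. The real divergence is in the crux inequality $h^+(m)\leq h(\Lc)$. The paper argues by contradiction: assuming $h(\Lc)\leq h^+(m)-3\delta$, it decomposes the set of fast-decaying points over integer times with weights $1/(4n^2)$, finds by pigeonhole a single time $n$ at which $\Lambda_n$ has measure $>1/(4n^2)$, and contradicts the covering bound $N(X,n,\epsilon)\leq \frac{1}{4n^2}e^{(h(\Lc)+\delta)n}$. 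You instead run a direct Borel--Cantelli argument on the union of ``light'' balls, concluding that $m$-almost every point eventually avoids them, hence $h^+(m,x,\epsilon)\leq c'$ almost everywhere. The two arguments exploit the same summability phenomenon, but yours is affirmative rather than by contradiction and yields the slightly stronger pointwise almost-everywhere statement $h^+(m,x)\leq h(\Lc)$ without invoking extremality of $m$; the paper's version needs only elementary measure counting. Finally, you spell out the deduction $h(\Lc)\geq h^+(m)=\widetilde h^+ +2\geq 2$ from Lemma \ref{lemma_slice_entropy} together with the existence of an extremal harmonic measure, a step the paper leaves implicit; this is a worthwhile addition.
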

\begin{proof}
We will use the notations $h^\pm(x,\epsilon)$, $B_t(x,\epsilon)$, $N_m(t,\epsilon,\delta)$ and $N(X,t,\epsilon)$ as in the abstract setting. First,  we will prove that  $h(m)\leq  h^+(m)$. 

Fix constants $\alpha>0$ and $0<\delta<1/4$. 
Given a constant $\epsilon>0$ small enough, we can find  a  subset  $X'\subset X$ with
   $m(X')\geq  1-\delta$ such that for $t$ large enough and for $x\in X'$,  we have
$${1\over t}\log {1\over m(B_t(x,\epsilon))}\leq  h^+(x,\epsilon)+{\alpha \over 2}\leq h^+(m)+\alpha. $$
So, for such $\epsilon$, $x$ and $t$, we have   
$$m(B_t(x,\epsilon))\geq  e^{-t(h^+(m)+\alpha)}.$$

Consider a maximal family of disjoint balls
$B_t(x_i,\epsilon)$ with center $x_i\in X'$. The union of $B_t(x_i,2\epsilon)$ covers $X'$ which is of measure at least $1-\delta$. Therefore, we have
$$ 1\geq   m \big  (\bigcup B_t(x_i,\epsilon) \big)\geq   N_m(t,2\epsilon,\delta)e^{-t(h^+(m) +\alpha)}.$$
It follows that  $ N_m(t,2\epsilon,\delta)\leq e^{t(h^+(m) +\alpha)}$ for $t$ large enough. 
Since this inequality holds for every $\alpha>0$, we deduce that $h(m)\leq  h^+(m)$.

We now prove that $h^-(m)\leq h(m)$. As above, given $\epsilon>0$, we can find a subset $X''$ with  $m(X'')\geq  3/4$ such that for $t$ large enough and for $x\in X''$,  we have
$$m(B_t(x,6\epsilon))\leq  e^{-t(h^-(m)-\alpha)}.$$
Consider a minimal family of balls $B_t(x_i,\epsilon)$ which covers a set of measure at least $3/4$. 
By removing the balls which do not intersect $X''$, we still have a family which covers a set of measure at least $1/2$. So, each ball $B_t(x_i,\epsilon)$ is contained in a ball $B_t(x_i',2\epsilon)$ centered at a point $x_i'\in X''$. 
Vitali's covering lemma implies  the  existence of a  finite sub-family of disjoint balls  $B_t(y_j,2\epsilon)$
such that $\bigcup B_t(y_j,6\epsilon)$  covers $\bigcup B_t(x'_i,2\epsilon)$.
Hence,
$$1/2\leq   m \big  (\bigcup B_t(y_j,6\epsilon) \big)\leq   N_m(t,\epsilon,3/4)e^{-t(h^-(m) -\alpha)}.$$
It follows that  $2N_m(t,\epsilon,3/4)\geq e^{t(h^-(m)- \alpha)}$ and therefore $h(m)\geq  h^-(m)$.

It remains  to show that  $h^+(m)\leq  h(\Lc)$. Suppose  in order to get a contradiction that
$h(\Lc)\leq  h^+(m)-3\delta$  for some $\delta>0.$ For any  $\epsilon>0$, there exists
$t_0$ large  enough   such that for all $t\geq t_0$
$${1\over t} \log N(X,t,\epsilon)\leq  h(\Lc)+{\delta\over 2}\cdot$$
In particular, we have
$$N(X,t,\epsilon)\leq  {1\over 4t^2} e^{(h(\Lc)+\delta)t}.$$

Fix now an $\epsilon>0$ small enough and then $t_0$ large enough.
Since  $h^+(m)-\delta\geq  h(\Lc)+2\delta,$ we have $m(\Lambda)>1/2$ where
$$\Lambda:=\Big\lbrace x\in X:\  \sup_{t\geq t_0}  -{1\over t} \log m(B_t(x,2\epsilon))\geq  h(\Lc)+2\delta  
\Big\rbrace.$$
Define
$$\Lambda_t:=\Big\lbrace  x\in X:\   -{1\over t} \log m(B_t(x,2\epsilon))\geq  h(\Lc)+\delta  \Big\rbrace$$
and
$$\Lambda'_t:=\Big\lbrace  x\in X:\   -{1\over t} \log m(B_t(x,2\epsilon))\geq  h(\Lc)+2\delta  \Big\rbrace$$
 
Since $t$ is large, we have $\Lambda_t'\subset \Lambda_{t+\alpha}$ for $0\leq\alpha\leq 1$.
Consider integer numbers $n$ larger than $t_0$. We have
 $$\Lambda=\bigcup_{t\geq t_0} \Lambda'_t\subset\bigcup_{n\geq t_0} \Lambda_n.$$ So, we can find
  $n\geq t_0$  such that $m(\Lambda_n)>1/(4n^2)$.
  Hence,  by definition of $\Lambda_t$, we get
  $$ N(\Lambda_n,n,2\epsilon)>  {1\over 4n^2} e^{(h(\Lc)+\delta)n}.$$
Therefore, 
$$N(X,n,\epsilon)> {1\over 4n^2} e^{(h(\Lc)+\delta)n}.$$
This  is  a   contradiction.
\end{proof}

Here are some fundamental problems concerning metric entropies for Riemann surface laminations. Assume here that $(X,\Lc)$ is a compact smooth lamination by hyperbolic Riemann surfaces but the problems can be stated in a more general setting. 

\begin{problem}\rm
Consider extremal harmonic probability measures $m$.  Is the following {\it variational principle} always true
$$h(\Lc)=\sup_m h(m)\quad  ?$$
\end{problem}

Even when this principle does not hold, it is interesting to consider the invariant 
$$h(\Lc)-\sup_m h(m)$$
and to clarify the role of the hyperbolic time in this number.

\begin{problem}\rm
If $m$ is above, is the identity $h^+(m)=h^-(m)$ always true ?
\end{problem}

We believe that the answer is affirmative and gives an analog of the Brin-Katok theorem.

Notice that there is a notion of entropy for harmonic measures introduced by Kaimanovich \cite{Kaimanovich}.
Consider a metric $\omega$ of bounded geometry on the leaves of the lamination. Then,  
we can consider the heat kernel $p(t,\cdot,\cdot)$ associated to the Laplacian
determined by this metric.  If $m$ is  a  harmonic probability measure on $X,$    Kaimanovich defines the entropy
of $m$ as
$$
h_K(m):=\int  dm(x)\Big( \lim\limits_{t\to\infty} -{1\over t}\int p(t,x,y)\log p(t,x,y)\omega(y)\Big).
$$ 
He  shows that the limit  exists and is  constant $m$-almost everywhere when $m$ is  extremal. 

This notion of entropy has  been extensively  studied for universal covering of a compact Riemannian manifold, see e.g. Ledrappier \cite{Ledrappier}. It does not seem that  it was   studied for compact foliations  with singularities.
So, it would be of interest to find relations with our notions of entropy  defined above.
In  Kaimanovich's entropy,  the  transverse  spreading is  present through the variation of the heat kernel from leaf to leaf. It would be also interesting to make this dependence more explicit.

\small

\noindent
T.-C. Dinh, UPMC Univ Paris 06, UMR 7586, Institut de
Math{\'e}matiques de Jussieu, 4 place Jussieu, F-75005 Paris,
France.\\
{\tt  dinh@math.jussieu.fr}, {\tt http://www.math.jussieu.fr/$\sim$dinh}

\medskip

\noindent
V.-A.  Nguy{\^e}n, Math{\'e}matique-B{\^a}timent 425, UMR 8628, Universit{\'e} Paris-Sud,
91405 Orsay, France.\\
 {\tt VietAnh.Nguyen@math.u-psud.fr}, {\tt http://www.math.u-psud.fr/$\sim$vietanh}

\medskip

\noindent
N. Sibony, Math{\'e}matique-B{\^a}timent 425, UMR 8628, Universit{\'e} Paris-Sud,
91405 Orsay, France.\\
{\tt Nessim.Sibony@math.u-psud.fr}


\begin{thebibliography}{99}

  
 \bibitem{Bowen}  Bowen R.,  Topological entropy for noncompact sets,  {\it Trans. Amer. Math. Soc.}, {\bf 184} (1973), 125-136.  
 
\bibitem{BrinKatok}  Brin M. and Katok A.,    On local entropy.     Geometric dynamics (Rio de Janeiro, 1981),  30-38, {\it Lecture Notes in Math.}, {\bf 1007}, Springer, Berlin, 1983.

\bibitem{Buzzi}   Buzzi J.,  Dimensional entropies and semi-uniform hyperbolicity, 
{\it Proc. Intern. Cong. Math. Phys.,} Rio, (2006), to appear. {\tt arXiv:1102.0612}
  

\bibitem{Candel}
Candel  A.,    Uniformization of surface laminations, {\it Ann. Sci. \'Ecole Norm. Sup.} (4),  {\bf 26} (1993), no. 4, 489-516.

\bibitem{CandelConlon1}    
Candel A. and Conlon L., {\it Foliations. I.}  Graduate Studies in Mathematics, {\bf  23}, American Mathematical Society, Providence, RI, 2000. 

\bibitem{CandelConlon2}    
Candel A. and Conlon L., {\it Foliations. II.}  Graduate Studies in Mathematics, {\bf 60}, American Mathematical Society, Providence, RI, 2003. 

\bibitem{CandelGomezMont}
Candel A. and G{\'o}mez-Mont X., Uniformization of the leaves of a rational vector field, {\it  Ann. Inst. Fourier (Grenoble)}, {\bf 45} (1995), no. 4, 1123-1133.
 

\bibitem{DinhNguyenSibony1}   Dinh T.-C., Nguyen V.-A. and Sibony N.,    Dynamics of horizontal-like maps in higher dimension,
{\it  Adv. Math.},  {\bf 219} (2008), no. 5, 1689-1721. 

\bibitem{DinhNguyenSibony2}
Dinh T.-C., Nguyen V.-A.  and Sibony N.,  Heat  equation and  ergodic  theorems for Riemann surface laminations, {\it preprint}, (2010). 
{\tt arXiv:1004.3931}

\bibitem{DinhSibony}
Dinh T.-C. and  Sibony N.,    Upper bound for the topological entropy of a meromorphic correspondence,
{\it  Israel J. Math.},  {\bf 163} (2008), 29-44. 


 

\bibitem{EarleSchatz}   Earle C.J. and Schatz A.,     Teichm\"{u}ller theory for surfaces with boundary,
{\it  J. Differential Geometry,}  {\bf  4}  (1970), 169-185.


\bibitem{FornaessSibony1}
 Forn\ae ss J.-E. and Sibony N.,   Harmonic currents of finite energy and laminations, {\it Geom. Funct. Anal.}, {\bf  15} (2005), no. 5, 962-1003.
 
\bibitem{FSW}
Forn\ae ss J.-E., Sibony N. and  Forn\ae ss-Wold E.,  
Examples of Minimal Laminations and Associated Currents, {\it Math. Z.}, to appear. {\tt arXiv:1002.2779}

\bibitem{Garnett} 
Garnett L., Foliations, the ergodic theorem
and Brownian motion, {\it  J.  Funct. Analysis}, {\bf  51} (1983), 285-311.
 
\bibitem{Glutsyuk} Glutsyuk A.A.,  Hyperbolicity of the leaves of a generic one-dimensional holomorphic foliation on a nonsingular projective algebraic variety. (Russian)
 {\it Tr. Mat. Inst. Steklova}, {\bf 213} (1997), Differ. Uravn. s Veshchestv. i Kompleks. Vrem., 90-111; translation in {\it Proc. Steklov Inst. Math.} 1996, no. 2, {\bf 213}, 83-103. 
 

 \bibitem{GhysLangevinWalczak} Ghys \'E., Langevin R. and Walczak P.,    Entropie g\'eom\'etrique des feuilletages,  {\it Acta Math.},  {\bf 160}  (1988),  no. 1-2, 105-142. 

\bibitem{Gromov}
Gromov M.,  On the entropy of holomorphic maps,
\textit{Enseignement Math.}, \textbf{49} (2003), 217-235. {\it Manuscript} (1977).

\bibitem{Gromov2}  Gromov M., 
{\it Hyperbolic groups.}  Essays in group theory, 75-263,
{\it Math. Sci. Res. Inst. Publ.},  {\bf 8}, Springer, New York, 1987. 

\bibitem{Hurder} Hurder S.,  Classifying foliations.  {\it Foliations, geometry, and topology,}  1-65, {\it Contemp. Math.}, {\bf 498}, Amer. Math. Soc., Providence, RI, 2009.
  
\bibitem{Kaimanovich}     
 Kaimanovich V., Brownian motions on foliations: entropy, invariant
measures, mixing, {\it Funct. Anal. Appl.}, {\bf 22} (1989), 326-328.     

\bibitem{KatokHasselblatt}
Katok A. and Hasselblatt B.,  {\it  Introduction to the modern theory of dynamical systems.}  Encyclopedia of Mathematics and its Applications, {\bf 54}, Cambridge University Press, Cambridge, 1995. 

\bibitem{Ledrappier}  Ledrappier F., Profil d'entropie dans le cas continu.  Hommage \`a P. A. Meyer et J. Neveu,  {\it Ast\'erisque}, {\bf 236}  (1996), 189-198.
 
\bibitem{Neto}
Lins Neto A.,  Uniformization and the Poincar\'e metric on the leaves of a foliation by curves,
{\it  Bol. Soc. Brasil. Mat. (N.S.),}  {\bf 31} (2000), no. 3, 351-366. 


\bibitem{Yomdin}
Yomdin Y., Volume growth and entropy, \textit{Israel
  J. Math.}, \textbf{57} (1987), 285-300.

\bibitem{Walczak} Walczak P., {\it  Dynamics of foliations, groups and pseudogroups.}  Mathematics Institute of the Polish Academy of Sciences. Mathematical Monographs (New Series), {\bf 64}, Birkh\"{a}user Verlag, Basel, 2004. 

\bibitem{Walters}  Walters P., {\it An introduction to ergodic theory}, Graduate Texts in Mathematics, {\bf 79}, Springer-Verlag, New York-Berlin,  1982. 


\end{thebibliography}
\end{document}